\documentclass[12pt,bezier]{article}
\usepackage{indentfirst}
\usepackage{changepage}
\usepackage{amssymb}
\usepackage{enumerate}
\usepackage{mathrsfs}
\usepackage{amsmath}
\usepackage{amsfonts,amsthm,amssymb}
\usepackage{amsfonts}
\usepackage{graphicx}
\usepackage{caption}
\usepackage{float}
\usepackage{lineno}
\usepackage{color}
\usepackage{changes}
\usepackage[numbers,sort&compress]{natbib}

\textheight=22cm \textwidth=16cm
\parskip = 0.2cm
\topmargin=0cm \oddsidemargin=0cm \evensidemargin=0cm
\newtheorem{lem}{Lemma}[section]
\newtheorem{thm}[lem]{Theorem}

\newtheorem{defi}[lem]{Definition}
\newtheorem{con}[lem]{Conjecture}

\captionsetup[figure]{labelsep=period,font={scriptsize},name={Figure}}

\begin{document}
	\title{On the minimum degree of minimally $ t $-tough, claw-free graphs}
	
	\author{Hui Ma, Xiaomin Hu, Weihua Yang\footnote{Corresponding author. E-mail: ywh222@163.com; yangweihua@tyut.edu.cn}\\
		\\ \small Department of Mathematics, Taiyuan University of
		Technology,\\
		\small  Taiyuan Shanxi-030024,
		China\\
	}
	\date{}
	\maketitle
	
	{\small{\bf Abstract:} A graph $ G $ is minimally $ t $-tough if the toughness of $ G $ is $ t $ and deletion of any edge from $ G $ decreases its toughness. Katona et al. conjectured that the minimum degree of any minimally $ t $-tough graph is $ \lceil 2t\rceil $ and proved that the minimum degree of minimally $ \frac{1}2 $-tough and $ 1 $-tough, claw-free graphs is 1 and 2, respectively. We have show that every minimally $ 3/2 $-tough, claw-free graph has a vertex of degree of $ 3 $. In this paper, we give an upper bound on the minimum degree of minimally $t$-tough, claw-free graphs for $ t\geq 2 $.
		
		\vskip 0.5cm  Keywords: Minimally $ t $-tough; Toughness; Minimum degree; Claw-free graphs
		
		\section{Introduction}
		
		All graphs considered in this paper are finite, undirected, and without loops or multiple edges. Let $ G=\big(V(G), E(G)\big) $ be a graph. For any vertex $ v $ of $ G $, we denote the set of vertices of $ G $ adjacent to $ v $ by $ N_{G}(v) $ and the number of edges of $ G $ incident to $ v $ by $ d_{G}(v) $. Let $ N_{G}[u]=\left\{u\right\}\cup N_{G}(u) $ and $ \delta(G)=\min\left\{d_{G}(v) : v\in V(G)\right\} $. Let $ \kappa(G) $ denote the vertex connectivity of $ G $. For any vertex set $ S\subseteq V(G) $, we use $ G[S] $ and $ G-S $ to denote the subgraph of $ G $ induced by $ S $ and $ V(G)-S $, respectively, and $ w(G-S) $ to denote the number of components of $ G-S $. If $ S, T$ are subsets of $ V(G) $ or subgraphs of $ G $, let $ N_{S}(T) $ denote the set of vertices in $ S-T $ that are adjacent to some vertex of $ T $. The subgraph of $ G $ is said to be an atom, denoted by $ A $, if $ A=\min\left\{\lvert A_{i}\rvert : \lvert N_{G}(A_{i})\rvert=\kappa(G), w\big(G-N_{G}(A_{i})\big)\geq 2 \right\} $. 
		
		\begin{defi}
			The toughness of a graph $ G $, denoted by $ \tau(G) $, is defined as follows,
			$$ \tau(G)=\min\left\{\dfrac{\lvert S\rvert}{w(G-S)} : S\subseteq V(G), w(G-S)\geq 2\right\}$$
			if $ G $ is not complete, and $ \tau(G)=+\infty $ if $ G $ is complete.
			We say that a graph is $ t$-tough if its toughness is at least $ t $.
		\end{defi}
		
		Since Chv\'atal introduced a new graph invariant called toughness in 1973, many results concerning the relatin betweeen toughness and the existance of Hamilton cycles or factors have been obtained (see \cite{Lu, Enomoto, Kab, Nis}). To find a graph which is 2-tough but not hamiltonian-connected, Broersma introduced the definition of minimally $ t $-tough graphs (\cite{Broersma}).
		\begin{defi}
			A graph $ G $ is said to be minimally $ t $-tough if $ \tau(G)=t $ and $ \tau(G-e)<t $ for any $ e\in E(G) $.
		\end{defi}
		We note that it follows from the definition of toughness that a $t$-tough noncomplete graph is $2t$-connected. Thus the minimum degree of a $t$-tough noncomplete graph is at least $ \lceil 2t\rceil$. It is well-known that the minimum degree of every minimally $k$-connected graph is exactly $k$ (\cite{Mader}). Thus, the following is a natural conjecture.
		
		\begin{con}[Kriesell \cite{Kaiser}]
			Every minimally 1-tough graph has a vertex of degree 2.
		\end{con}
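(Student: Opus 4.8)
The plan is to argue by contradiction: suppose $ G $ is minimally $ 1 $-tough but $ \delta(G)\ge 3 $, and derive a contradiction. Since $ G $ is $ 1 $-tough and noncomplete it is $ 2 $-connected, so $ 2\le\kappa(G)\le\delta(G) $. The engine of the argument is the minimality hypothesis, recast as a local statement about single edges. For a fixed edge $ e=uv $, the condition $ \tau(G-e)<1 $ produces a set $ W $ with $ w\big((G-e)-W\big)>|W| $. Because deleting one edge raises the component count by at most one, while $ \tau(G)=1 $ forces $ |S|\ge w(G-S) $ for every separating set $ S $, one obtains $ |W|=w(G-W) $, with $ u,v $ lying in a common component $ C $ of $ G-W $ that is split into two by the removal of $ e $; in other words, every edge of $ G $ becomes a bridge after deleting a suitable tight set $ W_e $ satisfying $ |W_e|=w(G-W_e) $. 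I would isolate this as a first lemma, since it converts the global minimality condition into a family of tight cuts indexed by the edges.

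Next I would develop the internal structure of a single tight set. Fix a tight set $ S $, let $ C_1,\dots,C_k $ be the components of $ G-S $ with $ k=|S| $, and form the bipartite adjacency graph between $ S $ and $ \{C_1,\dots,C_k\} $. Checking Hall's condition through the toughness inequality settles the matter: for $ T\subseteq S $, deleting $ S\setminus T $ isolates every component of $ G-S $ not adjacent to $ T $, so $ w\big(G-(S\setminus T)\big)\ge k-c(T) $, where $ c(T) $ counts the components adjacent to $ T $; the bound $ |S\setminus T|\ge w\big(G-(S\setminus T)\big) $ then forces $ c(T)\ge|T| $, and hence $ S $ admits a perfect matching to the components of $ G-S $. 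In parallel I would invoke the atom $ A $ of $ G $: if $ \kappa(G)=2 $ and some atom is a single vertex $ a $, then $ N_G(a)=N_G(A) $ has size $ \kappa(G)=2 $, giving the desired degree-$ 2 $ vertex at once. The remaining task is to rule out the configurations in which $ \kappa(G)=2 $ but all atoms are large, and in which $ \kappa(G)\ge 3 $.

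To attack those cases I would combine the edge-indexed cuts $ W_e $ with the matching structure and a counting argument on the edges joining $ S $ to the smallest components of $ G-S $: a singleton component $ \{x\} $ has $ d_G(x)=|N_G(x)\cap S| $, so $ \delta(G)\ge 3 $ forces at least three edges from $ x $ into $ S $, a surplus to be played against the tightness $ |S|=w(G-S) $ and against the bridges supplied by the first lemma in order to overcount edges or to violate $ 1 $-toughness. The expected main obstacle is precisely the coordination among the tight sets of distinct edges: the sets $ W_e $ need not be nested or even comparable, so an inequality extracted from one edge can be undone by another, and the high-connectivity regime $ \kappa(G)\ge 3 $ removes the small fragment that made the singleton-component shortcut available. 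Overcoming this would require a genuinely global choice of one separating set that is simultaneously tight and compatible with the minimality of every incident edge, which is exactly the difficulty that has kept the statement a conjecture; I would therefore expect this plan to yield an unconditional proof only under an added hypothesis such as claw-freeness or a bound on $ \kappa(G) $, rather than in full generality.
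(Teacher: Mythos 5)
The statement you were asked to prove is presented in the paper as a \emph{conjecture} (Kriesell's Conjecture, attributed to \cite{Kaiser}); the paper offers no proof of it, and none is known. So there is no ``paper's own proof'' to compare against, and the only honest benchmark is whether your argument closes on its own. It does not, and you concede as much in your final sentence. Your preliminary reductions are sound and are in fact the standard ones: specializing Lemma 2.2 to $t=1$ does give, for every edge $e$ of a $2$-connected minimally $1$-tough graph, a tight set $W_e$ with $\lvert W_e\rvert=w(G-W_e)$ in which $e$ is a bridge, and the Hall-type argument showing that a tight set $S$ matches perfectly into the components of $G-S$ is a known consequence of $1$-toughness from the Katona--Solt\'esz--Varga line of work this paper builds on. The atom observation is also fine as far as it goes: Mader's atom theorem (Theorem 2.5) is exactly the tool the paper uses in the claw-free setting, and when $\kappa(G)=2$ and the atom is a singleton you do get a degree-$2$ vertex immediately.

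The genuine gap is the one you name yourself: nothing in the plan forces a singleton component, a singleton atom, or any bounded-degree local structure once $\kappa(G)\geq 3$ or once all atoms are large, and the tight sets $W_e$ attached to different edges need not be nested, laminar, or otherwise compatible, so inequalities extracted from one edge cannot be accumulated across edges. Your closing counting step presupposes a singleton component $\{x\}$ of some $G-S$, which is precisely what you cannot guarantee in general. This is why the paper and its predecessors establish degree bounds only under added hypotheses --- claw-freeness, which via Theorem 2.1 forces $\kappa(G)=2\tau(G)$ and lets the atom machinery bite, or small values of $t$. In short, your proposal is a reasonable inventory of the known partial machinery, but it is a research plan with an acknowledged missing idea, not a proof; had it been completed, it would resolve an open problem rather than reproduce an argument from this paper.
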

		
		We use $ n $ to denote the order of a graph. Katona et al. considered Kriesell's conjecture and show that every minimally 1-tough graph has a vertex of degree at most $ \frac{n}{3}+1 $. In addition, they proposed a generalized Kriesell's conjecture.
		
		\begin{con}[Generalized Kriesell's Conjecture \cite{Katona}]
			Every minimally t-tough graph has a vertex of degree $ \lceil 2t\rceil $.
		\end{con}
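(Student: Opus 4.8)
The plan is to mimic, at the level of toughness, the classical proof that every minimally $k$-connected graph has a vertex of degree exactly $k$, using the witness sets forced by edge-minimality in place of minimum cuts. Since $G$ is noncomplete and $t$-tough it is $\lceil 2t\rceil$-connected, hence $\delta(G)\ge\lceil 2t\rceil$; the entire content is therefore to produce a vertex whose degree meets this bound. For each edge $e=uv$, minimality gives a witness $W_e\subseteq V(G)$ with $|W_e| < t\, w(G-e-W_e)$. I would first establish the basic properties of witnesses: since re-adding $e$ can merge at most one pair of components, if $w(G-W_e)=w(G-e-W_e)$ then $W_e$ would already violate $t$-toughness, so in fact $e$ is a bridge of $G-W_e$ with both ends outside $W_e$ and $w(G-e-W_e)=w(G-W_e)+1$. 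Combining $|W_e|\ge t\, w(G-W_e)$ (from $t$-toughness, valid once $w(G-W_e)\ge 2$; the degenerate case $w(G-W_e)=1$ is handled separately and also yields tight control) with the witness inequality gives the sandwich $t\, w(G-W_e)\le|W_e| < t\, w(G-W_e)+t$. Thus every witness is a near-tight separator, and the exactly tight separators (those with $|S|=t\, w(G-S)$) carry most of the structural information.

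Next I would localize at a vertex $v$ of minimum degree and consider the family $\{W_e : e\ni v\}$ of witnesses of its incident edges. The goal is to show that if $d(v)>\lceil 2t\rceil$ these witnesses can be amalgamated into a single separator $S$ with $|S| < t\, w(G-S)$, contradicting $\tau(G)=t$; equivalently, to exhibit an incident edge whose removal leaves the toughness unchanged, contradicting minimality. To this end I would use the notion of atom introduced above, together with the companion notion of a fragment, a set $A$ with $N_G(A)$ a tight separator and $A$ a union of components of $G-N_G(A)$. The engine of Mader-type arguments is an uncrossing inequality, so I would attempt to prove a submodular-type bound for the deficiency $f(S)=|S|-t\, w(G-S)$, namely $f(S\cap T)+f(S\cup T)\le f(S)+f(T)$ for separators $S,T$ in sufficiently general position. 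Such a bound would show that tight and near-tight separators, and their atoms, are closed under the intersection operations needed to trap $v$ and force a low-degree vertex inside a smallest atom.

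The hard part, and the reason the conjecture remains open in full generality, is precisely this uncrossing step. The term $|S|$ is modular and harmless, but the component count $w(G-S)$ is not submodular under union and intersection of separators: passing from $S,T$ to $S\cap T$ and $S\cup T$ can merge or split components unpredictably, so $w(G-(S\cap T))+w(G-(S\cup T))$ need not dominate $w(G-S)+w(G-T)$ in either direction. Consequently the clean lattice of fragments that drives Mader's theorem has no exact toughness analogue, and this is the single obstacle on which the whole argument turns. My plan is to recover a usable one-sided inequality valid when the two witnesses genuinely cross, each separating vertices lying inside the other, thereby reducing the general situation to a bounded number of crossing configurations; failing an exact bound, I would fall back on an averaging argument over $\{W_e : e\ni v\}$, charging the extra component created in each $G-e-W_e$ against the neighbours of $v$ to force a vertex of degree $\lceil 2t\rceil$ directly. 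Establishing the uncrossing inequality, or a workable substitute, is the decisive step I expect to spend nearly all of the effort on.
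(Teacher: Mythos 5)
The statement you are attempting is a \emph{conjecture} in this paper, not a theorem: the paper offers no proof of it, and it remains open. The paper's actual result (Theorem 1.8) is much weaker --- an upper bound of $\big\lceil \frac{10t-5}{3}\big\rceil$ on the minimum degree, and only for claw-free graphs with $t\ge 2$. So there is no paper proof to compare yours against; the only question is whether your proposal constitutes a proof, and it does not. Your first paragraph (the sandwich $t\,w(G-W_e)\le |W_e| < t\,w(G-W_e)+t$ and the fact that $e$ is a bridge of $G-W_e$) is correct and matches the paper's Lemma 2.2, but everything after that is a plan rather than an argument. The decisive step --- the uncrossing inequality $f(S\cap T)+f(S\cup T)\le f(S)+f(T)$ for the deficiency $f(S)=|S|-t\,w(G-S)$ --- is never established, and as you yourself observe, it cannot hold in general because the component count $w(G-S)$ is not submodular under intersection and union of separators. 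The fallback averaging argument over $\{W_e : e\ni v\}$ is likewise only named, not carried out. A proof that defers its entire content to an inequality known to fail, plus an unspecified substitute, is a gap covering the whole argument; your own admission that ``the conjecture remains open'' for exactly this reason concedes the point.

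It is worth noting where your atom/fragment idea \emph{can} be made to work, because it is essentially what the paper does for its partial result. Under the claw-free hypothesis, the Matthews--Sumner theorem gives $2\tau(G)=\kappa(G)$, which converts the toughness problem into a connectivity problem; only then do atoms and Mader's theorem (that an atom meeting a minimum cut lies inside it and has size at most $\kappa(G)/2$) become applicable, since they are statements about minimum vertex cuts, not about tight toughness separators. The paper combines this with a careful analysis of the witness sets $S(e)$ for edges inside an atom (its Lemma 2.4) to trap a low-degree vertex, and even then only reaches $\big\lceil\frac{10t-5}{3}\big\rceil$ rather than $\lceil 2t\rceil$. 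Without claw-freeness there is no bridge from toughness separators to minimum cuts, which is precisely the obstruction your uncrossing step was meant to overcome.
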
	
		
		A claw-free graph is a graph containing no copy of  $ K_{1,3} $ as an  induced subgraph. There are two open conjectures in terms of connectivity and toughness of claw-free graphs. Matthews and  Sumner conjectured that every 2-tough, claw-free graph is hamiltonian (\cite{Matthews}), and Thomassen conjectured that every 4-connected line graph is hamiltonian (\cite{Thomassen}). In \cite{R} it is shown that the two conjectures are equivalent. 
		
		By a theorem of Matthews and Sumner \cite{Matthews}, the toughness of a claw-free graph is exactly half of its connectivity. Katona et al. gave a completely characterization of minimally $ \frac{1}2 $-tough and $ 1 $-tough, claw-free graphs in \cite{Gyula,Katona}. Recently, we have show that Generalized Kriesell's Conjecture is true for minimally $ 3/2 $-tough, claw-free graphs in \cite{Ma}.
		
		\begin{thm}[\cite{Katona}]
			The class of minimally $ 1/2 $-tough, claw-free graphs consists of exactly those graphs that can be built up in the following way.
			\begin{enumerate}[1.]
				\item Take a tree $ T $ with maximum degree at most $ 3 $ where the set of vertices of degree $ 1 $ and $ 3 $ together form an independent set.
				\item Now delete every veretx of degree $ 3 $, but connect its $ 3 $ neighbors with a triangle.
			\end{enumerate}
		\end{thm}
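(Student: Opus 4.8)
The plan is to prove both inclusions, and the central simplification is the theorem of Matthews and Sumner that a claw-free graph satisfies $\tau(G)=\kappa(G)/2$; in particular $\tau(G)=1/2$ is equivalent to $\kappa(G)=1$, so a minimally $1/2$-tough claw-free graph is exactly a connected claw-free graph with a cut vertex in which every edge is ``connectivity-critical'' in the appropriate sense. I would first record two structural facts forced by $\tau(G)=1/2$. First, if $v$ is a cut vertex then $1/w(G-v)\ge\tau(G)=1/2$ forces $w(G-v)=2$, so deleting any cut vertex leaves exactly two components. Second, in any claw-free graph a single vertex $s$ of a separating set $S$ can be adjacent to at most two components of $G-S$ (three pairwise non-adjacent neighbours would induce a $K_{1,3}$); applying this to a \emph{tight} set, i.e. a set with $w(G-S)=2|S|$, and counting incidences shows each such $s$ meets exactly two components and each component meets exactly one vertex of $S$. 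These give very rigid ``cherry'' tight sets, which drive the whole argument.

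For the ``if'' direction I would take a graph $G$ produced by the construction and verify the three required properties directly. Claw-freeness holds because every surviving vertex comes from a vertex of $T$ of degree at most $2$, and its neighbourhood in $G$ splits into at most two cliques (one per surviving tree-edge, a deleted degree-$3$ neighbour contributing an adjacent pair), so $\alpha(N_G(v))\le 2$ and no induced $K_{1,3}$ appears. Since $T$ has a leaf whose neighbour becomes a cut vertex of $G$, we get $\kappa(G)=1$ and hence $\tau(G)=1/2$ by Matthews--Sumner. For minimality I would check each edge: a pendant edge is a bridge whose deletion drops the toughness to $0$, while for an edge inside a triangle I would delete the opposite triangle vertex---a cut vertex, whose existence is guaranteed precisely because the degree-$1$ and degree-$3$ vertices of $T$ are independent---to turn the edge into a bridge and so create $2|S|+1$ components. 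This step also explains why the independence hypothesis on $T$ is exactly the right one.

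The substance is the ``only if'' direction. Starting from a minimally $1/2$-tough claw-free $G$, I would analyse its block--cut-vertex tree; because each cut vertex leaves exactly two components, every cut vertex lies in exactly two blocks. The heart of the argument, and the step I expect to be the main obstacle, is to show that every block is $K_2$ or $K_3$: given a block $B$ with at least four vertices I would choose an edge $e$ of $B$ not incident to its attaching cut vertex and use minimality to produce a tight set $S$ in which $e$ is a bridge, then derive a contradiction between the rigid cherry structure of $S$, the claw-free clique condition at cut vertices (the neighbours of a cut vertex on each side of it form a clique), and the $2$-connectivity of $B$; equivalently one can argue through the atom $A$, whose minimality as a fragment should force it to be a single vertex or a triangle. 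Once blocks are known to be triangles and edges, I would reverse the construction---replacing each triangle block by a new vertex joined to its three vertices and retaining the bridges---to recover a tree $T$, and the fact that a triangle meets the rest of $G$ only at vertices having exactly one neighbour outside the triangle translates into ``$\Delta(T)\le 3$ with the degree-$1$ and degree-$3$ vertices independent.'' Verifying that this reconstruction is well defined and that the independence condition emerges \emph{exactly}, rather than as a weaker constraint, is the delicate bookkeeping I would still need to complete.
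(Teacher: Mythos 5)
Note first that the paper itself offers no proof of this statement: it is quoted from the cited work of Katona and Varga, so there is no in-paper argument to compare yours against, and your attempt must be judged on its own. Your framework is the right one. Matthews--Sumner correctly reduces $\tau(G)=1/2$ to $\kappa(G)=1$; every cut vertex does leave exactly two components; and for a non-bridge edge $e$ the set $S=S(e)$ of Lemma 2.2 is indeed tight, $w(G-S)=2\lvert S\rvert$, so your incidence count does give the ``cherry'' structure (each vertex of $S$ adjacent to exactly two components, each component to exactly one vertex of $S$). The ``if'' direction as you outline it goes through, including the observation that the independence hypothesis is exactly what makes every triangle vertex a cut vertex.

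The genuine gap is that the announced core of the ``only if'' direction is a plan rather than a proof, and you say so yourself. Two deductions must be supplied. First, for a non-bridge edge $e=uv$ with tight set $S$: the component $C(e)$ of $G-S$ containing $e$ attaches to a unique $s\in S$; claw-freeness at $s$ (which also has a neighbour in a second component) forces $N_{C(e)}(s)$ to be a clique; and since $e$ is the only edge of $C(e)$ joining $C_{u}(e)$ to $C_{v}(e)$, that clique must be exactly $\{u,v\}$, for otherwise one side of the bridge would have no neighbour in $S$ at all and $e$ would be a bridge of $G$. It follows that any cycle through $e$ must leave $C_{v}(e)$ via $s$ and can re-enter $C(e)$ only via $s$, so the unique cycle through $e$ is the triangle $uvs$; since in a $2$-connected block on at least four vertices the edge $e$ and an edge at a fourth vertex lie on a common cycle, every block is $K_{2}$ or $K_{3}$. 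Your phrase ``derive a contradiction between the cherry structure, the clique condition, and $2$-connectivity'' gestures at this but does not yet contain it. Second, the exactness of the independence condition is not mere bookkeeping: you must rule out a triangle block $\{u,v,s\}$ in which $u$ has degree $2$ in $G$. Apply the first deduction to the opposite edge $vs$: the vertex $u$ cannot lie in $C(vs)$ (it would join the two sides of the bridge $vs$), so $u$ is the unique attachment vertex of $C(vs)$ in its tight set; but then both neighbours of $u$ lie in the single component $C(vs)$, contradicting that every vertex of a tight set meets exactly two components. With these two steps the reconstruction of the tree $T$ is justified; without them the argument is incomplete.
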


		\begin{thm}[\cite{Gyula}]
			If $ G $ is a minimally 1-tough, claw-free graph of order $ n\geq 4 $, the $ G=C_{4} $.
		\end{thm}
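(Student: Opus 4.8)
The plan is to show that $G$ must be a cycle, so that $G=C_n$ (with $C_4$ as the smallest, order-$4$ case) follows. By the Matthews--Sumner identity $\tau(G)=\kappa(G)/2$ for claw-free graphs, $\tau(G)=1$ forces $\kappa(G)=2$; in particular $G$ is $2$-connected and $\delta(G)\ge 2$. The engine of the argument is a reformulation of minimality. For an edge $e=uv$, deleting $e$ can raise the component count of $G-S$ by at most one, and only by turning $e$ into a bridge, so an integer squeeze shows that $\tau(G-e)<1=\tau(G)$ holds iff there is a \emph{tight} cut $S$ (one with $|S|=w(G-S)$ and $u,v\notin S$) in which $e$ is a bridge of the component of $G-S$ that contains it. Thus minimal $1$-toughness says: every edge is a bridge of some tight-cut component. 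My aim is to deduce from this that $G$ is triangle-free, because a claw-free triangle-free graph has maximum degree at most $2$ (three mutually non-adjacent neighbours together with their common vertex form an induced claw), and then $2\le\delta(G)\le\Delta(G)\le 2$ together with connectivity forces $G=C_n$.

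First I would record the structure of an arbitrary tight cut $S$, with components $C_1,\dots,C_w$ and $|S|=w$. Claw-freeness gives that each $s\in S$ has neighbours in at most two of the $C_i$ (else $s$ with one neighbour in each of three components is an induced $K_{1,3}$), while $2$-connectivity gives that each $C_i$ has neighbours in at least two vertices of $S$ (otherwise a single such vertex is a cut vertex). Counting the edges of the $S$-to-components incidence bipartite graph $H$ in two ways yields $2w\ge |E(H)|\ge 2w$, so $H$ is $2$-regular, and since $G$ is connected $H$ is a single even cycle. Hence components and cut vertices alternate around a cycle, each $C_i$ attached to exactly two cut vertices and each cut vertex to exactly two components. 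I would also note the local clique lemma: if $s\in S$ meets two components $C,C'$, then $N(s)\cap C$ is a clique, since a non-adjacent pair there, a neighbour of $s$ in $C'$, and $s$ itself form a claw.

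The heart of the proof is showing $G$ is triangle-free, and this is where I expect the real work. Suppose $xyz$ is a triangle and consider the critical edge $xy$. Any tight cut $S$ witnessing its criticality must contain the apex $z$, for otherwise the $x,y$-path through $z$ survives inside the component of $G-S$ and $xy$ is not a bridge there. Fixing such an $S$, the component $C\ni x,y$ splits along the bridge $xy$ into sides $X\ni x$ and $Y\ni y$, and by the cyclic structure $z$ is one of the two cut vertices attached to $C$; call the other $s'$. Using $2$-connectivity of $X$ and of $Y$ (each must reach $S$ in some vertex besides its bridge endpoint) I would locate small $2$-cuts among $\{z,x\}$, $\{z,y\}$, $\{z,s'\}$, and then feed in the criticality of the other two triangle edges $xz,yz$, whose witnessing cuts must contain $y$ and $x$ respectively, to pin $x$ and $y$ down as cut vertices of nearby tight cuts. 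Combining the clique lemma at these cuts with the bridge condition should produce either an induced claw (at $s'$ or at $z$) or a cut $S^\ast$ with $|S^\ast|<w(G-S^\ast)$, contradicting $1$-toughness. The hard part will be organizing this case analysis: one must control how $s'$ attaches to $X$ and to $Y$ and rule out each configuration, and it is here, rather than in the clean reductions above, that the bulk of the proof lies.

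Once triangle-freeness is in hand the conclusion is immediate: every neighbourhood is then an independent set of size at most $2$, so $\Delta(G)\le 2$; with $\delta(G)\ge 2$ this makes $G$ a disjoint union of cycles, and $2$-connectivity makes it a single cycle $C_n$. The displayed case $n=4$ is exactly $C_4$, and I would finish by checking the converse, that each such cycle is indeed minimally $1$-tough and claw-free, confirming that the minimally $1$-tough claw-free graphs of order at least $4$ are precisely the cycles.
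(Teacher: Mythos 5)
This theorem is imported from \cite{Gyula}; the paper contains no proof of it, so there is no internal argument to compare yours against and I can only judge the proposal on its own terms. Your outer skeleton is sound: for $t=1$ the integer squeeze correctly turns the minimality condition (Lemma 2.2) into ``every edge is a bridge of a component of $G-S$ for some set $S$ with $|S|=w(G-S)$ avoiding its endpoints,'' and the endgame (triangle-free plus claw-free gives $\Delta(G)\le 2$, which with $\delta(G)\ge\kappa(G)=2$ and connectivity forces a single cycle) is airtight. Note also that the conclusion you aim at, $G=C_n$, is the correct form of the result; the ``$G=C_4$'' printed in the statement appears to be a typo, since every $C_n$ with $n\ge 4$ is minimally $1$-tough and claw-free.

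The genuine gap is that the step you yourself identify as the heart of the proof --- triangle-freeness --- is never actually carried out. Everything after ``Suppose $xyz$ is a triangle'' is written in the conditional (``I would locate\dots'', ``should produce\dots''), and the configuration analysis that is supposed to manufacture either an induced claw or a set $S^{\ast}$ with $|S^{\ast}|<w(G-S^{\ast})$ is precisely the hard part; without it there is no proof. Two of the structural facts you intend to lean on there are also not fully established. First, tight sets with $|S|=w(G-S)=1$ do occur and cannot be ignored: for $C_4$ itself, the only sets witnessing the criticality of an edge $e$ are single vertices $s$ with $G-s$ connected and $e$ a bridge of $G-s$, and for such sets the claim that each component meets at least two vertices of $S$ fails, so the ``$2$-regular incidence graph'' picture does not apply; in particular ``the other cut vertex $s'$ attached to $C$'' need not exist. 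Second, even when $w\ge 2$, $2$-regularity of $H$ only makes $H$ a disjoint union of even cycles; connectivity of $G$ does not force $H$ to be a single cycle, because distinct cycles of $H$ can be linked by edges inside $S$. The local conclusions you actually use survive the second point, but the first point and, above all, the missing case analysis leave the argument incomplete.
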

		
		\begin{thm}[\cite{Ma}]
			Every minimally $ 3/2 $-tough, claw-free graph has a vertex of degree $ 3 $,
		\end{thm}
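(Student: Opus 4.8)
The plan is to argue by contradiction. Since $G$ is claw-free with $\tau(G)=3/2$, the Matthews--Sumner identity $\tau=\kappa/2$ gives $\kappa(G)=3$, hence $\delta(G)\ge 3$. I would assume $\delta(G)\ge 4$ and derive a contradiction. A first fact to record is that every $3$-element vertex cut $S$ satisfies $w(G-S)=2$: otherwise $\tau(G)\le |S|/w(G-S)\le 3/3=1<3/2$. Next I would pass to an atom. Let $A$ be an atom and $S=N_G(A)$, so $|S|=3$ and, by the previous remark, $G-S$ has exactly two components, $A$ and some $B$ with $|A|\le|B|$. The decisive easy case is $|A|=1$: then the unique vertex $a\in A$ has $N_G(a)\subseteq S$, so $d_G(a)\le 3$, and with $\delta(G)\ge 3$ this forces $d_G(a)=3$, which is exactly the vertex we want (and contradicts $\delta(G)\ge 4$). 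Thus the whole problem reduces to showing that, under $\delta(G)\ge 4$, the atom cannot have $|A|\ge 2$.

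To control larger atoms I would combine two tools. First, claw-freeness: for each $s\in S$ the sets $N_G(s)\cap A$ and $N_G(s)\cap B$ are cliques, since two nonadjacent neighbors of $s$ inside $A$ together with any neighbor of $s$ in $B$ (one exists, as $S$ is a minimum separator) would induce a claw; more generally no neighborhood contains an independent triple, and together with $\delta(G)\ge 4$ this pins down the local structure along the boundary between $A$ and $S$. Second, the minimality of the toughness, used through its standard certificate: for every edge $e=uv$ there is a set $S_e$ with $u,v\notin S_e$ lying in a common component of $G-S_e$ but in different components of $(G-e)-S_e$, and with $\tfrac{3}{2}w\le|S_e|<\tfrac{3}{2}(w+1)$ where $w=w(G-S_e)$. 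The arithmetic leaves only a short list of possibilities for $(w,|S_e|)$, namely $(2,3),(2,4),(3,5),\dots$, the tight and ``almost tight'' cases; because $\kappa(G)=3$ the principal case is $(w,|S_e|)=(2,3)$, i.e. $S_e$ is itself a minimum cut.

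Applying this certificate to a carefully chosen edge incident with the boundary of $A$ should produce a $3$-separator cutting off a fragment strictly smaller than $A$, contradicting the minimality of the atom, or else directly isolate a vertex all of whose neighbors lie in a $3$-set, giving the desired degree-$3$ vertex. The borderline configurations to handle explicitly are $|A|=2$ and $A$ a small clique, where the bound $d_G(a)\le |A|-1+|S|$ is tight: there $\delta(G)\ge 4$ forces each vertex of $A$ to be adjacent to all of $S$, and the claw-free clique structure on $S$ (already $S$ cannot be independent, else $a_1$ would be the center of a claw) should be pushed to a contradiction with $\tau(G)=3/2$.

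The main obstacle will be precisely the step that converts a certificate cut $S_e$ into a genuine minimum separator contradicting atom-minimality. The sets $S_e$ guaranteed by minimality need not be minimum cuts and may cross $S$, $A$ and $B$ awkwardly; the standard remedy is an uncrossing (submodularity) argument for the separator function $X\mapsto|N_G(X)|$, used to replace $S_e$ by a $3$-element cut while keeping the relevant fragment small, after which the claw-free clique structure forces that fragment to be a single vertex. Managing the resulting local cases, together with the tight small-atom configurations, is where I expect the bulk of the casework — and the real difficulty — to lie.
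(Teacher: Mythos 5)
Your setup is sound as far as it goes: $\kappa(G)=3$ by Matthews--Sumner, every $3$-cut leaves exactly two components, and the reduction to ``an atom $A$ with $|A|\ge 2$ cannot exist when $\delta(G)\ge 4$'' is the right target (note the present paper only cites this theorem from its reference [9]; the visible machinery of Section~2 is the analogue for $t\ge 2$). But the proposal stops exactly where the proof has to happen. The entire contradiction for $|A|\ge 2$ is carried by phrases such as ``should produce a $3$-separator cutting off a fragment strictly smaller than $A$'' and ``should be pushed to a contradiction,'' and you yourself flag that the conversion of a certificate set $S(e)$ into a usable minimum separator is ``where the real difficulty lies.'' That conversion is not a routine uncrossing exercise: the sets $S(e)$ guaranteed by minimal toughness need not be cuts of size $3$ at all (your own arithmetic allows $(w,|S(e)|)=(2,4),(3,5),\dots$), the edge $e$ must be a bridge of $G-S(e)$, and submodularity of $X\mapsto|N_G(X)|$ does not by itself respect these constraints. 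Even the ``borderline'' case you isolate, $|A|=2$ with both vertices of $A$ adjacent to each other and to all of $S$, is not a contradiction on the information you have assembled: claw-freeness at each $s\in S$ is satisfied because the two $A$-neighbours of $s$ are adjacent, so one genuinely has to invoke the certificate for the edge $a_1a_2$ and run a counting argument of the kind appearing in Lemma~2.4 of this paper. None of that is done.

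The missing idea that makes the atom reduction work cleanly is Mader's atom theorem (Theorem~2.5 here): if an atom meets any minimum cut then it is contained in it and $|A|\le\frac12\kappa(G)$, which for $\kappa=3$ forces $|A|=1$ immediately. With that, the whole task becomes ``exhibit one vertex of $A$ lying in some $3$-cut,'' and the lemmas of Section~2 (the bounds on $|N_{S(e)}(C(e))|$, the edge-counting between $S(e)$ and the components, and the analysis of $C_u(e)$, $C_v(e)$) are precisely the tools that manufacture such a cut from the certificate of an edge inside $A$. Your plan gestures at the same ingredients but replaces the decisive step by an unexecuted uncrossing argument and an explicit admission that the bulk of the casework remains. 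As it stands this is a plausible programme, not a proof.
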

		
		In this paper, we gave an upper bound of the minimum degree of any minimally $ t $-tough, claw-free graphs, where $ t\geq 2 $. The proof is in Section 2.
		
		\begin{thm}\label{result 12}
			Let $ t\geq 2 $. Every minimally $t$-tough, claw-free graph has a vertex of degree at most $ \big\lceil \frac{10t-5}{3}\big \rceil$.
		\end{thm}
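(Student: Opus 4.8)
The plan is to combine the Matthews--Sumner identity with the edge-minimality of $G$ and the local clique structure forced by claw-freeness. First I would record the two facts that drive everything. Since $G$ is claw-free and noncomplete, the theorem of Matthews and Sumner gives $\kappa(G)=2t$, so $2t$ is an integer and $\delta(G)\ge\kappa(G)=2t$; this is the lower end of the interval in which the degree of the sought vertex must lie. Next I would set up the standard edge-witness lemma coming from minimality: for every edge $e=uv$ the inequality $\tau(G-e)<t=\tau(G)$ yields a set $S_e\subseteq V(G)\setminus\{u,v\}$ with $u,v$ in a common component of $G-S_e$ that is split when $e$ is deleted, and, writing $w=w(G-S_e)$, the size estimate $tw\le|S_e|<t(w+1)$. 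In particular every such witness cut has size strictly below $t(w+1)$, and in the main regime $w=2$ it lies in $[2t,3t)$.

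Second, I would analyse minimum cuts. If $S$ is any cut with $|S|=\kappa(G)=2t$, then $\tau(G)=t$ forces $\dfrac{2t}{w(G-S)}\ge t$, hence $w(G-S)=2$: every minimum cut splits $G$ into exactly two pieces. A short connectivity argument then shows that each of the two components has $S$ as its full neighborhood, so every $s\in S$ has a neighbor in each component. Applying claw-freeness at such an $s$ --- two neighbors lying in different components are nonadjacent, so a third neighbor nonadjacent to both would be the center of a claw --- I would deduce that the neighbors of $s$ inside either component form a clique. I would then fix an atom $A$, with minimum cut $S=N_{G}(A)$ and second component $B$, and carry this clique structure into the analysis of $A$.

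Third, I would bound the degree. Choosing a vertex $a\in A$ of minimum degree in $A$, I would examine the witness cuts $S_e$ of the edges $e$ incident to $a$ together with the clique decomposition of $N_{G}(a)$ forced by the fact that $G[N_{G}(a)]$ has independence number at most $2$. The size constraint $|S_e|<t(w+1)$ limits how many neighbors of $a$ a single witness cut can separate off, while the clique structure restricts how these neighbors are distributed among $A$, $S$, and paths through $B$. Balancing the number of neighbors of $a$ against the admissible cut sizes --- the extremal regime being $w=2$ and $|S_e|$ close to $3t$ --- and averaging over the vertices of the atom is what should produce the ratio $\tfrac{10}{3}$, i.e.\ the bound $d_{G}(a)\le\frac{10t-5}{3}=\frac{5(2t-1)}{3}$, with the ceiling absorbing the parity of $2t$.

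The main obstacle is exactly this last counting step: turning the qualitative facts (two-component minimum cuts, full neighborhoods, clique neighborhoods at cut vertices, witness cuts of size below $3t$) into the sharp constant. I expect to need a careful case distinction on the small values $w(G-S_e)\in\{2,3\}$ and on how many components each cut vertex reaches, a description of how consecutive witness cuts overlap on $A\cup S$, and an averaging argument over $A$ guaranteeing that at least one vertex attains the stated bound; controlling the overlaps so that the worst case yields $\frac{5(2t-1)}{3}$ rather than a weaker multiple of $t$ is where the real work lies.
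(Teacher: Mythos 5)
Your setup is sound as far as it goes: Matthews--Sumner giving $\kappa(G)=2t$, the edge-witness sets $S(e)$ with $t\cdot w\le |S(e)|<t(w+1)$, the fact that every minimum cut leaves exactly two components each seeing all of the cut, the clique structure of $N_{G}(s)$ inside each component for $s$ in a minimum cut, and the decision to work inside an atom $A$ are all ingredients the paper also uses. But the proposal stops exactly where the proof begins, and you say so yourself: the counting step that produces $\frac{10t-5}{3}$ is not carried out, and I do not see how your plan would complete it. Two specific things are missing. First, you never invoke Mader's atom theorem (if an atom meets a minimum cut-set then it is contained in it and has at most $\frac{1}{2}\kappa(G)$ vertices). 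That theorem is what yields the paper's Claim 1 --- under the assumption $\delta(G)\ge\lceil 3t\rceil$, \emph{no} vertex of the atom lies in any $2t$-vertex-cut --- and Claim 1 is in turn what forces, for every edge $e=xy$ inside $A$, that the witness component $C(e)$ is just $\{x,y\}$ and that $\lceil 3t\rceil-1\le |N_{S(e)}(C(e))|\le 4t-3$. Without that reduction, $d_{G}(x)$ is not controlled by $|S(e)|$ at all, and your ``balancing against admissible cut sizes'' has nothing to balance. Second, your proposed mechanism (take a minimum-degree vertex $a$ of $A$ and \emph{average} over the witness cuts of edges at $a$, with the extremal regime $w=2$, $|S_e|$ near $3t$) is not the mechanism that yields the constant. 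The paper instead chooses an \emph{extremal edge} $e_{1}=uv$ of $A$ maximizing $x=|N_{G}(v)-N_{G}(u)|$, derives the two competing inequalities $\delta(G)\le 4t-1-x$ (from $C(e_{1})=\{u,v\}$) and $\delta(G)\le 3t+\frac{x-4}{2}$ (from analyzing a second witness cut $S(e_{3})$ for an edge $e_{3}=w_{1}t_{1}$ reaching into $D(e_{1})$, where claw-freeness bounds $|N_{G}(w_{1})\cap N_{G}(t_{1})|\le 2t+x-4$), and the value $\frac{10t-5}{3}$ is precisely the balance point $x=\frac{2t+2}{3}$ of these two linear bounds. Nothing in your outline produces two bounds moving in opposite directions in a common parameter, so the ratio $\frac{10}{3}$ would not emerge from it; an averaging argument over the atom, even if it could be made precise, has no visible route to anything sharper than a bound of the form $c\cdot t$ with $c$ determined by the single worst witness cut. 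The gap is therefore not a detail but the core of the argument.
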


		
		
		\section{Proof of Theorem \ref{result 12}}
		
		\begin{thm}[\cite{Matthews}]\label{result 8}
			If $ G $ is a noncomplete claw-free graph, then $ 2\tau(G)=\kappa(G) $.
		\end{thm}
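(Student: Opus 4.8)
The plan is to prove the two inequalities $2\tau(G)\le\kappa(G)$ and $2\tau(G)\ge\kappa(G)$ separately, with only the second requiring claw-freeness. For the easy direction I would take a minimum vertex cut $S_0$, so that $\lvert S_0\rvert=\kappa(G)$ and $w(G-S_0)\ge 2$; then the definition of toughness gives $\tau(G)\le \lvert S_0\rvert/w(G-S_0)\le \lvert S_0\rvert/2=\kappa(G)/2$, hence $2\tau(G)\le\kappa(G)$. This uses nothing special about $G$ beyond noncompleteness. (If $G$ is disconnected the statement is trivial, since then $\kappa(G)=0$ and the choice $S=\emptyset$ forces $\tau(G)=0$; so I would assume $G$ is connected and $\kappa(G)\ge 1$.)

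For the reverse inequality I would fix an arbitrary $S\subseteq V(G)$ with $w(G-S)=:w\ge 2$ and aim to show $\lvert S\rvert\ge\tfrac12\kappa(G)\,w$; minimizing over all such $S$ then yields $\tau(G)\ge\kappa(G)/2$. Let $C_1,\dots,C_w$ be the components of $G-S$. The first key observation is where claw-freeness enters: no vertex $v\in S$ can have neighbors in three distinct components, for otherwise selecting one neighbor $u_j\in C_j$ in each of three components $C_1,C_2,C_3$ produces three pairwise nonadjacent vertices (since no edges run between different components of $G-S$), so $\{v,u_1,u_2,u_3\}$ would induce a claw $K_{1,3}$, contradicting the hypothesis. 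Consequently each vertex of $S$ lies in $N_S(C_i)$ for at most two indices $i$.

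The second ingredient is a connectivity lower bound on each component's boundary. Since $C_i$ is a full component of $G-S$, every edge leaving $C_i$ terminates in $N_S(C_i)$; deleting $N_S(C_i)$ therefore separates $C_i$ from the remaining components, which are nonempty because $w\ge 2$. Thus $N_S(C_i)$ is a vertex cut of $G$ and $\lvert N_S(C_i)\rvert\ge\kappa(G)$. Now I would double count the incidences between $S$ and the components: on one hand $\sum_{i=1}^{w}\lvert N_S(C_i)\rvert\ge\kappa(G)\,w$, and on the other hand, by the claw-free observation each $v\in S$ contributes to at most two of the sets $N_S(C_i)$, so $\sum_{i=1}^{w}\lvert N_S(C_i)\rvert\le 2\lvert S\rvert$. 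Combining these gives $2\lvert S\rvert\ge\kappa(G)\,w$, that is $\lvert S\rvert/w\ge\kappa(G)/2$, exactly as required.

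The main obstacle — indeed the only nonroutine point — is the double-counting step: one has to recognize that the per-component connectivity bound $\lvert N_S(C_i)\rvert\ge\kappa(G)$ and the per-vertex claw-free bound (at most two incident components) act as dual constraints on the same incidence sum, and that it is precisely this pairing that produces the factor of two relating $\tau$ and $\kappa$. Everything else is bookkeeping, and I would finish by combining the two inequalities to conclude $2\tau(G)=\kappa(G)$.
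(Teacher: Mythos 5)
Your proof is correct, but note that the paper does not prove this statement at all: it is Theorem~\ref{result 8}, quoted from Matthews and Sumner \cite{Matthews} and used as a black box, so there is no internal proof to compare against. Your argument is essentially the classical one for this result, and both directions are sound: the inequality $2\tau(G)\le\kappa(G)$ follows from testing the toughness ratio on a minimum vertex cut, and the reverse inequality follows from the two dual constraints on the incidence sum $\sum_{i}\lvert N_S(C_i)\rvert$ — each boundary $N_S(C_i)$ is a vertex cut of size at least $\kappa(G)$ (here you correctly use $w\ge 2$ so that the rest of the graph is nonempty after deleting $N_S(C_i)$), while claw-freeness caps each vertex of $S$ at two incident components, since neighbors in three distinct components of $G-S$ are pairwise nonadjacent and would induce a $K_{1,3}$. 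Combining gives $2\lvert S\rvert\ge \kappa(G)\,w(G-S)$ for every candidate cut $S$, hence $2\tau(G)\ge\kappa(G)$. Your handling of the disconnected case ($\kappa(G)=0=\tau(G)$) is also fine. It is worth observing that this same ``each vertex of $S$ sees at most two components'' observation is exactly the claw-freeness mechanism the paper reuses repeatedly in its own Lemma~\ref{result 2} (e.g.\ in the edge-counting inequalities of parts (2)--(4)), so your reconstruction matches the spirit of how the paper exploits the hypothesis, even though the theorem itself is only cited.
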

		
		From the definition of minimally $ t $-tough graphs, we can obtain the following lemma.
		
		\begin{lem}[\cite{Katona}]\label{result 1}
			Let $ t $ be a positive rational number and $ G $ a minimally $ t $-tough graph. For every edge $ e $ of $ G $,
			\begin{enumerate}
				\item the edge $ e $ is a bridge in $ G $, or
				\item there exists a vertex set $ S=S(e)\subseteq V(G)$ with
				$$ w(G-S)\leq \frac{\lvert S\rvert}{t} ~\emph{and}~ w\left((G-e)-S\right)>\frac{\lvert S\rvert}{t} ,$$
				and the edge $ e $ is a bridge in $ G-S $.
			\end{enumerate} 
			In the first case, we define $ S=S(e)=\emptyset $.
		\end{lem}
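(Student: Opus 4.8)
The plan is to fix an arbitrary edge $e=uv$ and, assuming it is \emph{not} a bridge of $G$ (otherwise we are in the first alternative with $S=\emptyset$ and there is nothing to prove), to extract the required set $S$ directly from the drop in toughness caused by deleting $e$. Since $G$ is minimally $t$-tough we have $\tau(G-e)<t$; as $\tau(G-e)<\infty$ the graph $G-e$ is noncomplete, so its toughness is attained by some $S\subseteq V(G)$ with $w\big((G-e)-S\big)\ge 2$ and $|S|/w\big((G-e)-S\big)=\tau(G-e)<t$, i.e. $w\big((G-e)-S\big)>|S|/t$. This is exactly the second inequality in the statement, so it remains to show, for this same $S$, that $w(G-S)\le |S|/t$ and that $e$ is a bridge of $G-S$.

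First I would argue that $u,v\notin S$. If, say, $u\in S$, then $e$ is already absent from $G-S$, so $(G-e)-S=G-S$ and hence $w(G-S)=w\big((G-e)-S\big)\ge 2$; but then $S$ witnesses $\tau(G)\le |S|/w(G-S)=\tau(G-e)<t$, contradicting $\tau(G)=t$. With $u,v\notin S$ the edge $e$ lies in $G-S$ and $(G-e)-S=(G-S)-e$, so deleting one edge changes the component count by $0$ or $1$; thus $w(G-S)\le w\big((G-e)-S\big)\le w(G-S)+1$.

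Now I would split on $w(G-S)$. If $w(G-S)\ge 2$, then $S$ is a genuine toughness cut of $G$, so $\tau(G)=t$ gives $w(G-S)\le |S|/t$, which is the first inequality. Moreover $w\big((G-e)-S\big)>|S|/t\ge w(G-S)$ forces $w\big((G-e)-S\big)=w(G-S)+1$, so deleting $e$ raised the component count by exactly one, i.e. $e$ is a bridge of $G-S$; all three requirements then hold and this branch is complete.

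The hard part is the remaining case $w(G-S)=1$, where $G-S$ is connected and deleting $e$ splits it into exactly two pieces $A\ni u$ and $B\ni v$, so $e$ is again a bridge of $G-S$, but one must still verify $w(G-S)=1\le |S|/t$, i.e. $|S|\ge t$. Note $S\ne\emptyset$ here, since $S=\emptyset$ would make $e$ a bridge of $G$ itself, the excluded first alternative. To bound $|S|$ I would enlarge $S$ by one endpoint of $e$: if $A$ contains a vertex besides $u$, then adding $u$ to $S$ separates $A\setminus\{u\}$ from $B$ (the edge $e$ was their only link inside $G-S$), giving $w\big(G-(S\cup\{u\})\big)\ge 2$, whence $\tau(G)=t$ yields $t\le (|S|+1)/2$ and so $|S|\ge 2t-1\ge t$ for $t\ge 1$; the symmetric move handles $|B|\ge 2$. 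In the degenerate case $A=\{u\}$, $B=\{v\}$ one instead uses $N_G(u)\subseteq S\cup\{v\}$ together with $\delta(G)\ge\kappa(G)\ge 2t$ (valid because a noncomplete $t$-tough graph is $2t$-connected) to get $|S|\ge 2t-1$; and for $t\le 1$ the bound $|S|\ge t$ is immediate from $|S|\ge 1$. Combining these gives $|S|\ge t$ in every situation. I expect this final case — squeezing $|S|\ge t$ out of the connectivity and minimum degree of $G$ — to be the only genuinely delicate point, the rest being bookkeeping about how deleting a single edge perturbs the number of components.
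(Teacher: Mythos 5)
Your proof is correct. Note that the paper itself gives no proof of this lemma --- it is stated as a quoted result from \cite{Katona} --- so there is no in-paper argument to compare against; your argument is essentially the standard one behind the cited source: take a tough set $S$ of $G-e$, show $u,v\notin S$, and split on whether $w(G-S)\geq 2$ (where $t$-toughness of $G$ gives the first inequality and integrality forces $w((G-e)-S)=w(G-S)+1$, i.e.\ $e$ is a bridge of $G-S$) or $w(G-S)=1$ (where $|S|\geq t$ must be extracted from $\kappa(G)\geq 2t$, respectively from $S\neq\emptyset$ when $t\leq 1$). Your treatment of the delicate case $w(G-S)=1$, including the degenerate subcase $A=\{u\}$, $B=\{v\}$ handled via $\delta(G)\geq\kappa(G)\geq 2t$, is complete and closes the only real gap such an argument could have.
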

		\par

		Let $ G $ be a minimally $ t $-tough graph.  For readability of the proofs of following lemmas and Theorem \ref{result 12}  , we denote the follwing notations. 
		Let $ S(e) $ be a minimum vertex set guaranteed by Lemma \ref{result 1} for every edge $ e=uv\in E(G) $, and $ \big|S(e)\big|=k(e) $. Let $ C(e) $ be the component of $ G-S(e) $ containing $ e $ and $ D(e) $ be the union of components of $ \big(G-S(e)\big)-C(e) $. In particular, let $ C_{u}(e)$ and $C_{v}(e) $ denote the components of $ \left(G-e\right)-S(e) $ containing $ u $ and $ v $, respectively. 
		
		Lemma \ref{result 1} implies that the following Lemma holds.
		\begin{lem}\label{result 15}
			Let $ G $ be a minimally $ t $-tough graph. For an arbitrary edge $ e=uv\in E(G) $, if $ D(e)=\emptyset $, then $ \delta(G)\leq 2t $ or every endpoint of $ e $ is contained in a $ 2t $-vertex-cut of $ G $.
		\end{lem}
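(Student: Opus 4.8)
The plan is to unpack the hypothesis $D(e)=\emptyset$ and read off the structure it forces. Since $D(e)=\emptyset$, the graph $G-S(e)$ consists of the single component $C(e)$, i.e.\ it is connected, and $e=uv$ is a bridge of it. Deleting a bridge from a connected graph produces exactly two components, so $(G-e)-S(e)$ has precisely the two components $C_u(e)\ni u$ and $C_v(e)\ni v$; in particular $u,v\notin S(e)$. Feeding $w\big(G-S(e)\big)=1$ and $w\big((G-e)-S(e)\big)=2$ into the two inequalities of Lemma \ref{result 1} gives $1\le k(e)/t$ and $2>k(e)/t$, that is $t\le k(e)<2t$. (The alternative of Lemma \ref{result 1}, that $e$ is a bridge of $G$ itself with $S(e)=\emptyset$, does not occur here: a bridge forces either a leaf or a cut vertex, hence $\kappa(G)\le 1$, contradicting the $2t$-connectivity of the noncomplete $t$-tough graph $G$ for $t\ge 2$.)

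Next I would record the key adjacency fact. Because $e$ is the \emph{only} edge of $G-S(e)$ joining $C_u(e)$ to $C_v(e)$, the vertex $u$ has no neighbour in $C_v(e)$ other than $v$, and symmetrically for $v$; hence $N_G(u)\subseteq S(e)\cup\big(V(C_u(e))\setminus\{u\}\big)\cup\{v\}$. With this in hand I would split into two cases according to whether the two sides of the bridge are trivial. If one side is a single vertex, say $V(C_u(e))=\{u\}$, then $N_G(u)\subseteq S(e)\cup\{v\}$, so $d_G(u)\le k(e)+1\le 2t$ by the bound $k(e)<2t$ (using that $2t=\kappa(G)$ is an integer in the relevant, claw-free, application so that $k(e)\le 2t-1$), whence $\delta(G)\le 2t$.

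Otherwise both $C_u(e)$ and $C_v(e)$ contain at least two vertices. Then $S(e)\cup\{v\}$ separates the nonempty set $V(C_u(e))$ from the nonempty set $V(C_v(e))\setminus\{v\}$, since the only edge between the two sides was $e$, now destroyed by deleting $v$; so $S(e)\cup\{v\}$ is a vertex cut of $G$, and likewise $S(e)\cup\{u\}$ is a vertex cut. Each has size $k(e)+1$. Since $\kappa(G)\ge 2t$ we get $k(e)+1\ge 2t$, while $k(e)<2t$ gives $k(e)+1\le 2t$; hence $k(e)+1=2t$. Thus $S(e)\cup\{u\}$ and $S(e)\cup\{v\}$ are $2t$-vertex-cuts containing $u$ and $v$ respectively, so every endpoint of $e$ lies in a $2t$-vertex-cut, as required.

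The conceptual content is light; the main thing to get right is the integrality squeeze that pins $k(e)+1$ to exactly $2t$, which relies on reading the correct values $w(G-S(e))=1$ and $w((G-e)-S(e))=2$ out of the bridge structure and on the global bound $\kappa(G)\ge 2t$. The only other point needing care is verifying that $S(e)\cup\{u\}$ and $S(e)\cup\{v\}$ are genuine cuts, i.e.\ that after deleting $v$ (resp.\ $u$) no edge survives between the two sides, which is exactly what the bridge hypothesis guarantees. I expect no deeper obstacle.
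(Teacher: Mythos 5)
Your proposal is correct and follows essentially the same route as the paper: extract $k(e)<2t$ from Lemma \ref{result 1} using $w(G-S(e))=1$ and $w((G-e)-S(e))=2$, handle the trivial-side case via $d_G(u)\le k(e)+1\le 2t$, and otherwise observe that $S(e)\cup\{u\}$ and $S(e)\cup\{v\}$ are cuts whose size is squeezed to exactly $2t$ by $\kappa(G)\ge 2t$. The only cosmetic difference is that the paper first pins down $k(e)=2t-1$ by a separate contradiction before splitting into cases, whereas you perform the integrality squeeze inside the nontrivial case; you also explicitly dispose of the bridge alternative and flag the integrality of $2t$, which the paper leaves implicit.
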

		
		\begin{proof}
		 Since $ D(e)=\emptyset $, we can obtain $ k(e)\leq 2t-1 $ by Lemma \ref{result 1}. Suppose that $ k(e)\leq 2t-2 $. Assume $ e=uv $. Since $ \delta(G)\geq 2\kappa(G)\geq 2t $, we have $ C_{u}\neq \left\{u\right\} $. Note that $ S(e)\cup \left\{u\right\}$ is a $ (2t-1) $-vertex-cut of $ G $, a contradiction. So $ k(e)=2t-1 $. Clearly, if $ C_{u}(e)=\left\{u\right\} $ or  $ C_{v}(e)=\left\{v\right\} $, then $\delta(G)\leq \min \left\{d_{G}(u), d_{G}(v)\right\}=2t $. If $ C_{u}(e)\neq \left\{u\right\} $ and $ C_{v}(e)\neq \left\{v\right\} $, then $ \left\{u\right\}\cup S(e) $ and $ \left\{v\right\}\cup S(e) $ are  $ 2t $-vertex-cuts of $ G $. 
		\end{proof}
		
		The following lemma gives more information of the vertices in $ N_{S(e)}\big(C(e)\big) .$
		\begin{lem}\label{result 2}
			Let $ G $ be a minimally $ t $-tough, claw-free graph. For an arbitrary edge $ e=uv\in E(G) $, let $ S_{1}=N_{S(e)}\big(C(e)\big)-N_{S(e)}\big(D(e)\big) $. If $ D(e)\neq \emptyset $, then the following statements hold. 
			\begin{enumerate}[(1)]
				\item If $ k(e)\geq \lceil 3t\rceil $, then $ \big|N_{S(e)}\big(C(e)\big)\cap N_{S(e)}(C_{i})\big|\leq 2t-1 $, where $ C_{i} $ is an arbitrary component of $ G-S(e)-C(e) $.\label{result 6}
				
				\item   $ \lvert S_{1}\rvert\leq \lceil t\rceil-1$ and $ 2t \leq \big|N_{S(e)}\big(C(e)\big)\big|\leq 4t-1-\lvert S_{1}\rvert .$
				Moreover, if $  \lvert S_{1}\rvert =4t-1-\big|N_{S(e)}\big(C(e)\big)\big| $, then every vertex of $ S(e)-S_{1} $ is contained in a $ 2t $-vertex-cut of $ G $.\label{result 3}
				
				\item If $\big|N_{S(e)}\big(C(e)\big)\big|=4t-1 $, then every vertex of $ S(e) $ is contained in a $ 2t $-vertex-cut of $ G $.
				
				\item If $ \big|N_{S(e)}\big(C(e)\big)\big|=4t-2 $, let $ C_{j} $ be an arbitrary component of $ G-S(e)-C(e) $, then $ \lvert N_{S(e)}\left(C_{j}\right)\rvert\leq 2t+1 $ and
				there are at least $ 2t-1 $ vertices of $ N_{S(e)}\big(C(e)\big) $ contained in some $ 2t $-vertex-cuts of $ G $. \label{result 11}
				
				\item Assume $ C_{u}(e)\neq \left\{u\right\} $ and $ C_{v}(e)\neq \left\{v\right\} $.  If there exists a vertex $ w\in S(e) $ such that $ N_{C(e)}(w)=\left\{u, v\right\} $, then every vertex of $ \left\{u, v\right\} $ is contained in a $ 2t $-vertex-cut of $ G $. Otherwise every vertex of $ \left\{u\right\}\cup S_{1} $ or $ \left\{v\right\}\cup S_{1} $ is contained in a $ 2t $-vertex-cut of $ G $.  \label{result 4}
				
				\item If $ C_{u}(e)=\left\{u\right\} $ and $ C_{v}(e)\neq \left\{v\right\} $, then $ d_{G}(u)\leq 2t+1 $ or  $ v$ is contained in a $ 2t $-vertex-cut of $ G $.  \label{result 5}
				
			\end{enumerate}
		\end{lem}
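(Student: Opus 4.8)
The plan is to extract from claw-freeness and minimality a small set of structural facts and then reuse them in all six parts. By Theorem \ref{result 8}, $\kappa(G)=2\tau(G)=2t$, so $2t\in\mathbb{Z}$, $\delta(G)\ge 2t$, and every vertex cut of $G$ has at least $2t$ vertices; hence a cut of size exactly $2t$ is automatically a minimum cut, and to prove that a vertex lies in a ``$2t$-vertex-cut'' it suffices to exhibit one cut of size $2t$ through it. Let $m$ be the number of components of $D(e)$, so $w(G-S(e))=m+1\ge 2$; Lemma \ref{result 1} then gives $(m+1)t\le k(e)<(m+2)t$. Two facts drive everything: (i) by claw-freeness no vertex of $S(e)$ can have neighbours in three distinct components of $G-S(e)$ (they would induce a $K_{1,3}$), so each vertex of $S(e)$ meets at most two components; and (ii) by toughness each vertex of $S(e)$ meets at least one component, since deleting a vertex isolated from every component would leave a cut of size $k(e)-1$ with $m+2$ components, contradicting $\tau(G)=t$ via $(k(e)-1)/(m+2)<t$. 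Finally, for every component $K$ of $G-S(e)$ the set $N_{S(e)}(K)$ is a cut isolating $K$, so $|N_{S(e)}(K)|\ge 2t$; and since $e=uv$ is a bridge of $G-S(e)$, the sets $N_{S(e)}(C_u(e))\cup\{v\}$ and $N_{S(e)}(C_v(e))\cup\{u\}$ are cuts, whence $|N_{S(e)}(C_u(e))|,|N_{S(e)}(C_v(e))|\ge 2t-1$.

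I would establish part (2) first, as the remaining parts are its extremal refinements. The lower bound $|N_{S(e)}(C(e))|\ge 2t$ is the cut fact above. For $|S_1|$, note $N_{S(e)}(D(e))=S(e)-S_1$, so deleting $S(e)-S_1$ leaves the $m$ components of $D(e)$ together with the single connected block $C(e)\cup S_1$ (each vertex of $S_1$ attaches to the connected set $C(e)$ and to nothing in $D(e)$); this cut has $m+1$ components, so toughness forces $k(e)-|S_1|\ge(m+1)t$, and with $k(e)<(m+2)t$ we obtain $|S_1|<t$, i.e.\ $|S_1|\le\lceil t\rceil-1$. For the upper bound I count incidences between $S(e)$ and the components: by (i) and (ii) the total $\sum_K|N_{S(e)}(K)|$ equals $2k(e)$ minus the number of $S(e)$-vertices meeting exactly one component, and that number is at least $|S_1|$; hence $\sum_K|N_{S(e)}(K)|\le 2k(e)-|S_1|$. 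Subtracting $\sum_{K\subseteq D(e)}|N_{S(e)}(K)|\ge 2tm$ leaves $|N_{S(e)}(C(e))|\le 2k(e)-|S_1|-2tm<4t-|S_1|$, and integrality gives $|N_{S(e)}(C(e))|\le 4t-1-|S_1|$. When this is an equality, every $|N_{S(e)}(K)|$ with $K\subseteq D(e)$ must equal $2t$; each such $N_{S(e)}(K)$ is then a minimum cut, and since $S(e)-S_1=\bigcup_{K\subseteq D(e)}N_{S(e)}(K)$, every vertex of $S(e)-S_1$ lies in a $2t$-vertex-cut, which is the ``moreover'' clause.

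The $4t-1$ case (part (3)) and part (4) are the two extremes of this count, distinguished by the parity of $2t(m+2)$: as $|N_{S(e)}(C(e))|$ rises to $4t-1$ or $4t-2$, the slacks in the inequalities $2k(e)<2t(m+2)$ and $\sum_{K\subseteq D(e)}|N_{S(e)}(K)|\ge 2tm$ are nearly exhausted, forcing $|S_1|=0$ and all but at most one of the cuts $N_{S(e)}(K)$ $(K\subseteq D(e))$ to have size exactly $2t$ (the exceptional one at most $2t+1$, giving $|N_{S(e)}(C_j)|\le 2t+1$ in part (4)); reading off which vertices of $N_{S(e)}(C(e))$ sit inside these minimum cuts yields the stated counts of vertices in $2t$-vertex-cuts. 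Part (1), whose hypothesis $k(e)\ge\lceil 3t\rceil$ is equivalent to $m\ge 2$, I would prove by contradiction: assuming $|N_{S(e)}(C(e))\cap N_{S(e)}(C_i)|\ge 2t$, I would use these $\ge 2t$ common neighbours to reroute, building from $S(e)$ a strictly smaller set that still separates $e$ as in Lemma \ref{result 1}, the surviving second component of $D(e)$ (available since $m\ge 2$) being what keeps the configuration valid, contradicting the minimality of $k(e)$.

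For parts (5) and (6) I would use the bridge $e=uv$ directly. In part (5), with $C_u(e)\neq\{u\}$ and $C_v(e)\neq\{v\}$, a vertex $w\in S(e)$ with $N_{C(e)}(w)=\{u,v\}$ makes $\{u,v,w\}$ a triangle and lets the bridge be ``simulated'' through $w$, forcing the isolating cuts $N_{S(e)}(C_u(e))\cup\{v\}$ and $N_{S(e)}(C_v(e))\cup\{u\}$ down to size $2t$ and hence placing both $u$ and $v$ in $2t$-vertex-cuts; when no such $w$ exists, $S_1$ together with $u$ (or with $v$) can instead be completed to a minimum cut by adjoining the appropriate part of $N_{S(e)}(C_v(e))$ (resp.\ $N_{S(e)}(C_u(e))$). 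In part (6), $C_u(e)=\{u\}$ gives $d_G(u)=|N_{S(e)}(u)|+1$ with $|N_{S(e)}(u)|=|N_{S(e)}(C_u(e))|\ge 2t-1$; if $|N_{S(e)}(u)|\le 2t$ then $d_G(u)\le 2t+1$, while otherwise $N_G(u)=N_{S(e)}(u)\cup\{v\}$ is a cut isolating $u$ from which a $2t$-cut containing $v$ is produced. I expect the difficulty to be twofold. First, in parts (3) and (4), converting the tight incidence counts into statements about which individual vertices of $N_{S(e)}(C(e))$ occupy $2t$-vertex-cuts requires tracking the parity of $2t(m+2)$ and the single possible ``excess'' component carefully. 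Second, in part (1) the rerouting must be carried out so that $e$ provably remains a bridge of the reduced graph and all inequalities of Lemma \ref{result 1} survive, so that the contradiction with the minimality of $S(e)$ is genuine; this bridge-preservation is the crux.
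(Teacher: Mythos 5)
Your global setup (claw-freeness bounds each vertex of $S(e)$ to at most two components, toughness forces at least one, each component has at least $2t$ neighbours, $k(e)<t\,(l+1)$) is exactly the machinery the paper uses, and your treatment of part (2) --- deleting $S(e)-S_{1}$ to get $k(e)-|S_{1}|\geq t\,l$, then the double edge-count $|N_{S(e)}(C(e))|+2t(l-1)\leq 2k(e)-|S_{1}|<2t(l+1)-|S_{1}|$ with the equality analysis for the ``moreover'' clause --- matches the paper's proof, as does part (3). But there is a genuine gap in part (1). You propose to contradict the \emph{minimality} of $S(e)$ by building a smaller set that ``still separates $e$ as in Lemma \ref{result 1},'' and you correctly identify bridge-preservation as the crux --- but that crux cannot be resolved: if $W=N_{S(e)}(C(e))\cap N_{S(e)}(C_{i})$ has vertices adjacent to both $C_{u}(e)$ (or $C_{v}(e)$) and $C_{i}$, then in $G-\big(S(e)-W\big)$ these vertices typically create $u$--$v$ paths avoiding $e$, so $e$ need not remain a bridge and Lemma \ref{result 1} does not apply to the smaller set. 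The paper's argument avoids minimality and bridges entirely: by claw-freeness no vertex of $W$ meets a third component, so $w\Big(G-\big(S(e)-W\big)\Big)=l-1\geq 2$ (here $l\geq 3$ because $k(e)\geq\lceil 3t\rceil$), and the toughness of $G$ itself gives $k(e)-|W|\geq t(l-1)$; with $|W|\geq 2t$ this yields $k(e)\geq t(l+1)$, contradicting $k(e)<t(l+1)$. You should replace your rerouting plan with this direct toughness count.

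Parts (4), (5) and (6) are also only sketched where the paper has substantive arguments. In (4) the claim that at least $2t-1$ vertices of $N_{S(e)}(C(e))$ lie in $2t$-cuts is not just ``reading off'' the tight count: in the subcase where one component $C_{2}$ has $2t+1$ neighbours one needs part (1) to bound $\big|N_{S(e)}(C(e))\cap N_{S(e)}(C_{2})\big|\leq 2t-1$ so that $\big|N_{S(e)}(C(e))-N_{S(e)}(C_{2})\big|\geq 2t-1$, and those are the vertices shown to lie in minimum cuts. In (5) the phrase ``lets the bridge be simulated through $w$'' is not a proof; the paper's mechanism is a partition of $N_{S(e)}(C(e))$ into sets $A_{1},A_{2},A_{3},B_{1},B_{2},B_{3}$, pairwise disjointness from claw-freeness, and the inequality $2|A_{1}|+2|A_{3}|+\sum|B_{i}|\leq 4t-1$ forcing $|A_{i}|+|B_{i}|=2t-1$ so that $\{u\}\cup A_{1}\cup B_{1}$ (not your $N_{S(e)}(C_{u}(e))\cup\{v\}$, which isolates the wrong side and need not be small) is the required cut. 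Similarly in (6), ``a $2t$-cut containing $v$ is produced'' needs the argument with $M=N_{S(e)}(u)\cap N_{S(e)}\big(C_{v}(e)-\{v\}\big)$ showing that either $|N_{S(e)}(u)|\leq 2t$ or $\big|N_{S(e)}\big(C_{v}(e)-\{v\}\big)\big|=2t-1$, the latter set together with $v$ being the cut. As it stands the proposal is a correct outline for (2)--(3) but does not constitute a proof of (1), (4), (5) or (6).
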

		
		{\bf Proof.}  Let $ l=w\big(G-S(e)\big) $ and $ C_{1}=C(e), C_{2}, \cdots, C_{l} $ be the components of $ G-S(e) $. By Lemma \ref{result 1}, we have $ k(e)<t\cdot l+t.$
		\begin{enumerate}[(1)]
			\item Let $ C_{i} $ be an arbitrary component of $ G-S(e)-C(e) $ and $ W=\big|N_{S(e)}\big(C(e)\big)\cap N_{S(e)}(C_{i})\big| $. Suppose to the contrary that $ \lvert  W\rvert \geq 2t $.  Clearly, as $ \lceil 3t\rceil\leq k(e)< t\cdot l+t$, we can obtain $ w\big(G-S(e)\big)\geq 3 $. Since $ G $ is claw-free, every vertex of $ W $ is not adjecent to $ G(e)-S(e)-C(e)-C_{i} $. Thus $ w\Big(G-\big(S(e)-W\big)\Big)=l-1\geq 2 $. By the toughness of $ G $, we have
			$$ \big|S(e)-W\big|\geq t\cdot\left(l-1\right) .$$ By $ \lvert  W\rvert \geq 2t $, we have $ k(e)\geq t\cdot l+t $, a contradiction. 
			
			\item By the toughness of $ G $, we know that $ \lvert S(e)-S_{1}\rvert\geq t\cdot l .$ Since $ k(e)< t\cdot l+t $, we can obtain $ \lvert S_{1}\rvert\leq \lceil t\rceil-1. $  
			
			As $ \kappa(G)= 2t$, we can obtain $ \big|N_{S(e)}(C_{i})\big|\geq 2t $ for each $ i\in\left\{1, 2, \cdots, l \right\} $. Thus there are at least $ 2t\cdot (l-1)+\big|N_{S(e)}\big(C(e)\big)\big| $ edges coming from the components of $ G-S(e) $ to $ S(e) $ counting at most one from any component $ C_{i} $ to a particular vertex of $ S(e) $. Since $ G $ is claw-free, every vertex of $ S(e)-S_{1} $ have neighbors in at most two components of $ G-S(e) $. Then there are at most 2$\big(k(e)-\lvert S_{1}\rvert \big)+\lvert S_{1}\rvert $ edges coming from $ S(e) $ to the components of $ G-S(e) $ counting at most one edge from every component to a particular vertex of $ S(e) $. Therefore we conclude 
			\begin{equation}
				\big|N_{S(e)}\big(C(e)\big)\big|+2t\cdot (l-1)\leq 2\left(k(e)-\lvert S_{1}\rvert \right)+\lvert S_{1}\rvert <2\left(t\cdot l+t\right)- \lvert S_{1}\rvert
			\end{equation}
			and then $ 2t\leq\big|N_{S(e)}\big(C(e)\big)\big|\leq 4t-1-\lvert S_{1}\rvert $. 
			
			If $ \lvert S_{1}\rvert=4t-1- \big|N_{S(e)}\big(C(e)\big)\big|$, by (1), then $$ \big|N_{S(e)}\big(C(e)\big)\big|+2t\cdot (l-1)=2\left(k(e)-\lvert S_{1}\rvert \right)+\lvert S_{1}\rvert .$$ Therefore every component of $ G-S(e)-C(e) $ has $ 2t $ neighbors in $ S(e) $ and every vertex of $ S(e)-S_{1} $ has neighbors in exactly two components. Thus every vertex of $ S(e)-S_{1} $ is contained in a $ 2t $-vertex-cut of $ G $.
			
			\item If $ \big|N_{S(e)}\big(C(e)\big)\big|=4t-1 $, by a similar argument as above, then we have   
			$$ 4t-1+2t\cdot (l-1)\leq 2k(e)<2\left(t\cdot l+t\right) ,$$ so $ 4t-1+2t\cdot (l-1)=2k(e) $. It follows that every component of $ G-S(e)-C(e) $ has $ 2t $ neighbors in $ S(e) $ and every vertex of $ S(e) $ has neighbors in exactly two components. Therefore every vertex of $ S(e) $ is contained in a $ 2t $-vertex-cut of $ G $.  
			
			\item Let $ C_{2} $ be the component of $ G-S(e)-C(e) $, which has the most neighbors in $ S(e) $. By a similar argument as Lemma \ref{result 2} (2), we have  
			\begin{equation}
				\big|N_{S(e)}\big(C(e)\big)\big|+\big|N_{S(e)}\left(C_{2}\right)\big|+2t\cdot (l-2)\leq 2k(e)<2\left(t\cdot l+t\right) .
			\end{equation}
			As $ \big|N_{S(e)}\big(C(e)\big)\big|=4t-2 $, we can obtain $ \big|N_{S(e)}\left(C_{2}\right)\big|\leq 2t+1 $. By $ \kappa(G)=2t $, we conclude $ \big|N_{S(e)}\left(C_{i}\right)\big|\geq 2t $ for $ 2\leq i\leq l $. 
			
			If $ \big|N_{S(e)}\left(C_{2}\right)\big|=2t $, since $ \lvert N_{S(e)}\left(C_{2}\right)\rvert \geq \lvert N_{S(e)}\left(C_{i}\right)\rvert $ for $ 3\leq i\leq l $, then every component of $ G-S(e)-C(e) $ has exactly $ 2t $ neighbors in $ S(e) $. Thus  
			there are exactly $ 2t\cdot l+2t-2 $ edges coming from the components of $ G-S(e)-C(e) $ to $ S(e) $ counting at most one from any component $ C_{i} $ to a particular vertex of $ S(e) $. Since $ G $ is claw-free, every vertex of $ S(e) $ have neighbors in at most two components of $ G-S(e) $. And every vertex of $ S(e) $ is adjacent to at least one component of $ G-S(e) $, otherwise $ t\leq \dfrac{k(e)-1}{w\big(G-S(e)\big)+1} $, which contradicts Lemma 2.2. So there are exactly 2$\big(k(e)-x_{1}\big)+x_{1}  $ edges coming from $ S(e) $ to the components of $ G-S(e) $ counting at most one edge from every component to a particular vertex of $ S(e) $, where $ x_{1} $ is the number of the vertices in $ S(e) $ adjacent to exactly one component of $ G-S(e) $. Therefore 
			$$ 2t\cdot l+2t-2= 2\left(k(e)-x_{1} \right)+x_{1}\leq 2t\cdot l+2t-1-x_{1}.$$ Clearly, $ 0\leq x_{1}\leq 1. $ 
			If $ x_{1}=0 $, then $ 2t\cdot l+2t-2=2k(e) $, so every vertex $ S(e) $ is  adjacent to exactly two components and every component of $ S(e) $ has exactly $ 2t $ neighbors. Hence every vertex of $ G-S(e)-C(e) $ is conained in a $ 2t $-vertex-cut of $ G $.		
			If $ x_{1}=1 $, then $ 2t\cdot l+2t-2= 2(k(e)-1)+1 $. Let $ w $ be the vertex in $ S(e) $ adjacent to only one component of $ G-S(e) $. So every vertex $ S(e)-\left\{w\right\} $ has neighbors in exactly two components and every component of $ G-S(e)-C(e) $ has exactly $ 2t $ neighbors. Hence every vertex of $  S(e)-\left\{w\right\} $ is contained in a $ 2t $-vertex-cut of $ G $. 
			
			If $ \big|N_{S(e)}\left(C_{2}\right)\big|=2t+1 $, by (2), then $$ \big|N_{S(e)}\big(C(e)\big)\big|+\big|N_{S(e)}\left(C_{2}\right)\big|+2t\cdot (l-2)= 2k(e)=2t\cdot l+2t-1 .$$ Note that every component of $ G-S(e)-C(e)-C_{2} $ has $ 2t $ neighbors in $ S(e) $ and every vertex of $ S(e) $ has neighbors in exactly two components of $ G-S(e) $. Thus every vertex of $ N_{S(e)}\big(C(e)\big) -N_{S(e)}\left(C_{2}\right)$ is contained a $ 2t $-vertex-cut of $ G $. And by Lemma \ref{result 2} (\ref{result 6}), we have $ \big|N_{S(e)}\big(C(e)\big)\cap N_{S(e)}\left(C_{2}\right)\big|\leq 2t-1$. It follows that $ \big|N_{S(e)}\big(C(e)\big)- N_{S(e)}\left(C_{2}\right)\big|\geq 2t-1$ from $ \big|N_{S(e)}\big(C(e)\big)\big|=4t-2$. Thus the statement holds.

			\item Let $$ A_{1}=N_{S(e)}\big(C_{u}(e)-\left\{u\right\}\big)\cap N_{S(e)}\big(C_{v}(e)\big),$$
			$$ A_{2}=N_{S(e)}\big(C_{u}(e)\big)\cap N_{S(e)}\big(C_{v}(e)-\left\{v\right\}\big) ,$$ 
			$$A_{3}=\left\{w:w\in N_{S(e)}\big(C(e)\big)-N_{S(e)}\big(D(e)\big)~\emph{and}~ N_{C(e)}(w)=\left\{u, v\right\}\right\},$$
			$$ B_{1}=N_{S(e)}\big(C_{u}(e)-\left\{u\right\}\big)\cap N_{S(e)}\big(D(e)\big),$$ $$ B_{2}=N_{S(e)}\big(C_{v}(e)-\left\{v\right\}\big)\cap N_{S(e)}\big(D(e)\big), $$ $$
			B_{3}=N_{S(e)}(u)\cap N_{S(e)}(v)\cap N_{S(e)}\big(D(e)\big),$$
			Without loss of generality, we may assume $ \lvert A_{1}\rvert\geq \lvert A_{2}\rvert $.	
			
			Since $ G $ is claw-free, we have $ \left(A_{1}\cup A_{2}\right)\cap N_{S(e)}\big(D(e)\big)=\emptyset $ and $ B_{i}\cap B_{j}=\emptyset $ for $ i, j\in \left\{1, 2, 3\right\} $ and $ i\neq j .$ So  $ \left(A_{1}\cup A_{2}\right)\cap (A_{3}\cup B_{i})=\emptyset $ for $ i\in \left\{1, 2, 3\right\} $. 
			By Lemma \ref{result 2} (\ref{result 3}), we can obtain
			\begin{equation}
				\lvert A_{1}\cup A_{2}\rvert +\lvert A_{3}\rvert+\sum_{j=1}^{3}\lvert B_{i}\rvert\leq \big|N_{S(e)}\big(C(e)\big)\big|\leq 4t-1-\big(\lvert A_{1}\cup A_{2}\rvert +\lvert A_{3}\rvert\big).
			\end{equation} 
			Furthermore, by the minimality of $ S(e) $, we know 
			$ N_{S(e)}\big(C_{u}(e)-\left\{u\right\}\big)=A_{1}\cup B_{1} $ and $ N_{S(e)}\big(C_{v}(e)-\left\{v\right\}\big)=A_{2}\cup B_{2} .$  For $ i\in\left\{1, 2\right\}$, as $ \kappa(G)=2t $ by Theorem \ref{result 8}, we have 
			$\lvert A_{i}\rvert +\lvert B_{i}\rvert \geq 2t-1$. And by (3), we have 
			\begin{equation}
				4t-2+2\lvert A_{3}\rvert+\lvert B_{3}\rvert\leq 2\lvert A_{1}\rvert +2\lvert A_{3}\rvert+\sum_{j=1}^{3}\lvert B_{i}\rvert\leq 4t-1.
			\end{equation}
			Thus $2\lvert A_{3}\rvert+\lvert B_{3}\rvert \leq 1$. Clearly, $ \lvert A_{3}\rvert=0$.	
			
			Firstly, consider the case when $ \lvert B_{3}\rvert = 1 $. From (4), we can obtain $ \lvert  A_{1}\rvert+\lvert B_{1}\rvert+\lvert A_{2}\rvert+\lvert B_{2}\rvert=4t-2 $. So $\lvert A_{i}\rvert +\lvert B_{i}\rvert \geq 2t-1$ for $ i\in \left\{1, 2\right\} $. It follws that $ \left\{u\right\}\cup A_{1}\cup B_{1} $ and $ \left\{v\right\}\cup A_{2}\cup B_{2} $ are $ 2t $-vertex-cuts of $ G $.
			
			Next consider the case when $\lvert B_{3}\rvert =0 $.  We claim that $ A_{2}\subseteq A_{1}=S_{1} $. Suppose to the contrary that $ A_{2}-A_{1}\neq \emptyset $. Then $ \lvert A_{1}\cup A_{2}\rvert \geq \lvert A_{1}\rvert +1$. Since $ \lvert A_{i}\rvert +\lvert B_{i}\rvert \geq 2t-1 $ for $ i\in\left\{1, 2\right\} $, by (3), we can obtain $$ 4t\leq 2\lvert A_{1}\rvert +2+\lvert B_{1}\rvert +\lvert B_{2}\rvert\leq  \big|N_{S(e)}\big(C(e)\big)\big|\leq 4t-1,$$ a contradiction. Thus 
			\begin{equation}
				2\lvert A_{1}\rvert +\lvert B_{1}\rvert +\lvert B_{2}\rvert\leq 4t-1 .
			\end{equation}
			
			If $ \lvert A_{2}\rvert <\lvert A_{1}\rvert $, by (5), then $ \lvert A_{1}\rvert +  \lvert A_{2}\rvert +\lvert B_{1}\rvert+ \lvert B_{2}\rvert<4t-1$. Hence we can obtain $ \lvert  A_{1}\rvert+\lvert B_{1}\rvert=2t-1=\lvert A_{2}\rvert+\lvert B_{2}\rvert$. It follows that $ \left\{u\right\}\cup S_{1}\cup B_{1}$ and $ \left\{v\right\}\cup A_{2}\cup B_{2}$ are $ 2t $-vertex-cuts of $ G $. If $ S_{1}=A_{1}=A_{2} $, by (5), then $ \lvert  S_{1}\rvert+\lvert B_{1}\rvert=2t-1$ or $\lvert S_{2}\rvert+\lvert B_{2}\rvert=2t-1.$ So $ \left\{u\right\}\cup S_{1}\cup B_{1}$ or $ \left\{v\right\}\cup S_{1}\cup B_{2}$ is $ 2t $-vertex-cut of $ G $.

			\item 
			Let $ M=N_{S(e)}(u)\cap N_{S(e)}\big(C_{v}(e)-\left\{v\right\}\big) $. Since $ G $ is claw-free, we have $ M\cap N_{S(e)}\big(D(e)\big)=\emptyset $. By Lemma \ref{result 2} (\ref{result 3}), $2t\leq  \big|N_{S(e)}\big(C(e)\big)\big|\leq 4t-1-\lvert M\rvert $. And as $ \kappa(G)=2t $, we have $ \big|N_{S(e)}\big(C_{v}(e)-\left\{v\right\}\big)\big|\geq 2t-1 $. If $ M=\emptyset $, then $ \big|N_{S(e)}(u)\big|=\big|N_{S(e)}\big(C(e)\big)\big|-\big|N_{S(e)}\big(C(e)-\left\{v\right\}\big)\big|\leq 2t $, so $ d_{G}(u)\leq 2t+1 $. If $ M\neq \emptyset $, let $ x_{1}=\big|N_{S(e)}\big(C_{v}(e)-\left\{v\right\}\big)\cap N_{S(e)}\big(D(e)\big)\big| $ and $ x_{2}=\big|N_{S(e)}\left(u\right)\cap N_{S(e)}\big(D(e)\big)\big|$, then $ x_{1}\leq 2t-1-M $ or $ x_{2}\leq 2t-1-M $. Suppose to the contrary that $ x_{1}\geq 2t-\lvert M\rvert $ and $ x_{2}\geq 2t-\lvert M\rvert .$ Then $ x_{1}+x_{2}+2\lvert M\rvert\geq 4t $. Moreover, since $ M\cap N_{S(e)}\big(D(e)\big)=\emptyset $, we can obtain $$ 4t-\lvert M \rvert\leq x_{1}+x_{2}+\lvert M\rvert\leq\big|N_{S(e)}\big(C_{v}(e)\big)\big|\leq  4t-1-\lvert M\rvert  ,$$ a contradiction. By the minimality of $ S(e) $, 	we see that $\big|N_{S(e)}\big(C_{v}(e)-\left\{v\right\}\big)\big|=\lvert M\rvert +x_{1}\leq 2t-1 $ or $\big|N_{S(e)}\left(u\right)\big|=\lvert M\rvert+x_{2}\leq 2t-1 $. As $ \kappa(G)=2t $, we can obtain $ d_{G}(u)=2t $ or  $ \big|N_{S(e)}\big(C_{v}(e)-\left\{v\right\}\big)\big|= 2t-1 $.

		\end{enumerate}

		In some cases, there exist many vertices of $ N_{S(e)}\left(C(e)\right) $ contained in some minimum cut-sets of $ G $ by Lemma \ref{result 2}. Using the following Theorem, we can give an upper bound on the minimum degree of minimally $ t $-tough, claw-free graphs for $ t\geq 2 $.

		\begin{thm}[\cite{Mad}]\label{result 7}
			Let $ G $ be a connected graph, $ A $ be an atom of $ G $ and $ T $ be a minimum cut-set of $ G $. If $ A\cap T\neq \emptyset $, then $ A\subseteq T $ and $ \left|A\right|\leq \dfrac{1}{2}\kappa(G) $.
		\end{thm}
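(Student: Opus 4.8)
The plan is to prove the two conclusions separately, the containment $A\subseteq T$ first and the bound $|A|\le\tfrac12\kappa(G)$ second, using throughout the standard submodularity of the vertex-neighbourhood function: for all $X,Y\subseteq V(G)$,
$$\big|N_G(X\cap Y)\big|+\big|N_G(X\cup Y)\big|\le\big|N_G(X)\big|+\big|N_G(Y)\big|,$$
together with two elementary facts. First, writing $\kappa=\kappa(G)$, any nonempty $F\subsetneq V(G)$ whose complement $V(G)\setminus\big(F\cup N_G(F)\big)$ is nonempty satisfies $|N_G(F)|\ge\kappa$ (its neighbourhood is a separator), and if in addition $|N_G(F)|=\kappa$ then $F$ is a \emph{fragment}, i.e.\ an admissible set $A_i$ in the definition of the atom. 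Second, by minimality of the cut $T$, every component of $G-T$ has neighbourhood exactly $T$, so any union $C$ of components of $G-T$ and the complementary union $D$ both satisfy $N_G(C)=N_G(D)=T$. Set $S=N_G(A)$ and $\overline A=V(G)\setminus(A\cup S)$, so $|S|=\kappa$ and $\overline A\ne\emptyset$.

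To show $A\subseteq T$, I argue by contradiction, assuming $A\cap T\ne\emptyset$ but $A\setminus T\ne\emptyset$. Then $A$ meets some component $C$ of $G-T$; let $D$ be the union of the remaining components, nonempty since $w(G-T)\ge2$. Since every neighbour of a vertex of $A$ lies in $A\cup S$, the set $N_G(A\cap C)$ is disjoint from $\overline A$, so the complement of $A\cap C$ contains $\overline A$ and hence $|N_G(A\cap C)|\ge\kappa$; submodularity applied to $X=A$, $Y=C$ then forces $|N_G(A\cup C)|\le\kappa$. If the complement of $A\cup C$ is nonempty, then $|N_G(A\cup C)|\ge\kappa$ as well, so both neighbourhoods have size exactly $\kappa$ and $A\cap C$ is a fragment; but $A\cap T\ne\emptyset$ gives $A\cap C\subsetneq A$, so $A\cap C$ is a fragment strictly smaller than the atom, a contradiction. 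If instead the complement of $A\cup C$ is empty, then $(S\cup\overline A)\setminus C=\emptyset$, whence $D\subseteq A$; now $D$ is itself a fragment (its neighbourhood is $T$) with $D\subsetneq A$ because $A\cap C\ne\emptyset$, again contradicting minimality. Thus $A\subseteq T$.

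For the bound, now that $A\subseteq T$ I fix a component $C$ of $G-T$, let $D$ be the union of the rest, and record $\alpha=|A|$, $\beta=|\overline A\cap T|$, $\gamma=|S\cap T|$, $\delta=|S\cap C|$, $\epsilon=|S\cap D|$; partitioning $T$ and $S$ gives $\alpha+\beta+\gamma=\kappa$ and $\delta+\gamma+\epsilon=\kappa$. A direct computation shows $N_G(A\cup C)\subseteq(\overline A\cap T)\cup(S\cap T)\cup(S\cap D)$ with complement exactly $\overline A\cap D$, and symmetrically $N_G(A\cup D)$ has complement $\overline A\cap C$. If both $\overline A\cap C$ and $\overline A\cap D$ are nonempty, then $\beta+\gamma+\epsilon\ge\kappa$ and $\beta+\gamma+\delta\ge\kappa$; adding and using $\delta+\epsilon=\kappa-\gamma$ gives $2\beta+\gamma\ge\kappa$, hence $\beta+\gamma\ge\tfrac12\kappa$ and $\alpha=\kappa-\beta-\gamma\le\tfrac12\kappa$. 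If, say, $\overline A\cap C=\emptyset$, then $C=S\cap C\subseteq S$, so $C$ is a fragment and minimality gives $\alpha\le\delta$; combined either with $\beta\ge\delta$ (when $\overline A\cap D\ne\emptyset$, from $\beta+\gamma+\epsilon\ge\kappa=\delta+\gamma+\epsilon$, giving $\alpha\le\beta$ and $2\alpha\le\alpha+\beta\le\kappa$) or with the symmetric bound $\alpha\le\epsilon$ (when $\overline A\cap D=\emptyset$, since then $D\subseteq S$ is also a fragment), one obtains $2\alpha\le\kappa$ in every case, proving $|A|\le\tfrac12\kappa(G)$.

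The main obstacle I anticipate is the bookkeeping of the degenerate configurations in which one of the corner sets $A\cap C$, $\overline A\cap C$, $\overline A\cap D$ is empty: in each such case the ``obvious'' candidate for a smaller fragment degenerates (its separating neighbourhood stops being a genuine cut), and one must instead exhibit an alternative admissible set --- typically a whole side $C$ or $D$ that has collapsed into $S$, or the opposite union $D$ --- and verify that it really separates $G$ into at least two parts with a cut of size exactly $\kappa$ and is strictly smaller than $A$. Keeping the emptiness cases exhaustive and disjoint is where the argument needs the most care; I note that claw-freeness plays no role here, since this is a purely connectivity-theoretic statement about atoms.
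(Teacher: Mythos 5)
The paper itself gives no proof of this statement---it is quoted directly as Mader's theorem from reference [11]---so your argument has to stand on its own. Its second half is fine: once $A\subseteq T$ is in hand, your corner counts $\beta+\gamma+\epsilon\ge\kappa$, $\beta+\gamma+\delta\ge\kappa$ and the fragment arguments for the degenerate cases $\overline{A}\cap C=\emptyset$ or $\overline{A}\cap D=\emptyset$ all check out. The genuine gap is in the first half, in your second case of the proof that $A\subseteq T$. From ``the complement of $A\cup C$ is empty'' you infer ``$(S\cup\overline{A})\setminus C=\emptyset$, whence $D\subseteq A$''. That inference is a non sequitur: by your own definition the complement of $A\cup C$ is $V(G)\setminus\big(A\cup C\cup N_G(A\cup C)\big)$, and its emptiness says only that every vertex outside $A\cup C$ lies in $N_G(A\cup C)\subseteq S\cup T$. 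Since the vertices of $(S\cup\overline{A})\setminus C$ are free to sit inside $N_G(A\cup C)$, all you may conclude is $D\subseteq A\cup S$ and $\overline{A}\subseteq C\cup T$, not $D\subseteq A$. In particular nothing you wrote excludes the configuration $A\cap D=\emptyset$, $D\subseteq S$; there $D$ is indeed a fragment, but it is not a proper subset of $A$, so the minimality of the atom produces no contradiction and Case (ii) collapses.

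The case is repairable, but only by arguments of the kind you deploy in your second half, not by the one-line claim you made. Concretely, since the complement of $A\cap C$ contains $\overline{A}\neq\emptyset$, one has $\kappa\le\big|N_G(A\cap C)\big|\le |A\cap T|+|S\cap C|+|S\cap T|$, which together with $|S|=\kappa$ yields the corner inequality $|A\cap T|\ge|S\cap D|$. Now split Case (ii) (that is, $\overline{A}\cap D=\emptyset$) into three subcases. If $A\cap D=\emptyset$, then $D=S\cap D$ is a fragment, so minimality gives $|A|\le|S\cap D|\le|A\cap T|=|A|-|A\cap C|$, forcing $A\cap C=\emptyset$, a contradiction. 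If $A\cap D\neq\emptyset$ and $\overline{A}\cap C\neq\emptyset$, then your Case (i) argument applied to the corner $A\cap D$ (whose fragment-complement contains $\overline{A}\cap C$) exhibits a fragment properly contained in $A$. If $A\cap D\neq\emptyset$ and $\overline{A}\cap C=\emptyset$, so that $\overline{A}\subseteq T$, then $C=(A\cap C)\cup(S\cap C)$ is a fragment, and minimality $|A|\le|C|$ combined with the symmetric corner inequality $|A\cap T|\ge|S\cap C|$ (obtained from the corner $A\cap D$) forces $|A\cap D|\le 0$, again a contradiction. These are precisely the ``degenerate configurations'' you flag in your closing paragraph as the main obstacle; flagging them is not the same as handling them, and as written your proof of $A\subseteq T$ is incomplete.
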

		\noindent{\bf Proof of Theorem 1.8.}
		Let $ G $ be a minimally $ t $-tough, claw-free geaph. If $ \delta(G)\leq \lceil 3t\rceil -1 $, then Theorem 1.8 holds. Hence we assume $ \delta(G)\geq \lceil 3t\rceil  $ in the following. 	
		
		Let  $ A $ be an atom of  $ G $ and $ T=N_{G}(A) $. By Lemma \ref{result 8}, $ \lvert T\rvert=\kappa(G)=2t $. Clearly, $ \lvert A\rvert\geq 2 $, otherwise $ \delta(G)=2t $, which contradicts our assumption. 
		\begin{adjustwidth}{2em}{0cm}
			{\bf Claim 1.}\label{result 16}
			Every vertex of $ A $ is not contained in a $ 2t $-vertex-cut of $ G $. 
			\begin{proof}
				Suppose to the contrary that there is a vetex of $ A $ contained in a $ 2t $-vertex-cut of $ G $.  By Theorem 
				\ref{result 7}, we can obtain $ \delta(G)\leq \lceil 3t\rceil -1 $, a contradiction. 
			\end{proof}
		\end{adjustwidth}
		\begin{adjustwidth}{2em}{0cm}
			{\bf Claim 2.}
			For an arbitrary edge $ e=xy $ in $ A $, we have $ D(e)\neq \emptyset $, $ C(e)=\left\{x, y\right\} $ and $  \lceil 3t\rceil-1\leq\big|N_{S(e)}\big(C(e)\big)\big|\leq 4t-3 $ 
			\begin{proof}
				Suppose to the contrary that $ D(e)=\emptyset $. By Lemma \ref{result 15}, as $ \delta(G)\geq \lceil 3t\rceil $, we know that $ x $ is contained in a $ 2t $-vertex-cut of $ G $, which contradicts Claim 1. Thus $ D(e)\neq \emptyset .$
				
				Suppose to the contrary that $ C(e)\neq \left\{x, y\right\} $. By Lemma \ref{result 2} (\ref{result 4}) and (\ref{result 5}), as $ \delta(G)\geq \lceil 3t\rceil$, we note that $ x $ or $ y $ is contained in a $ 2t $-vertex-cut of $ G $, which contradicts Claim 1. Thus $ C(e)=\left\{x, y\right\} $ and then $  \big|N_{S(e)}\big(C(e)\big)\big|\geq \lceil 3t\rceil-1 $.

				Suppose that $ \big|N_{S(e)}\big(C(e)\big)\big|\geq 4t-2 $. 
				Since $ C(e)=\left\{x, y\right\} $, we see that $ N_{S(e)}\big(C(e)\big)= N_{G}(x)\cup N_{G}(y)-\left\{x, y\right\} $. Let $ S_{1}=N_{S(e)}\big(C(e)\big)-T $. Clealy, $ S_{1}\subseteq A $. As $ \lvert T\rvert=2t $, we have $  \lvert S_{1}\rvert\geq 2t-2\geq 2$. And by Lemma \ref{result 2} (\ref{result 3}), we can obtain $ \big|N_{S(e)}\big(C(e)\big)\big|\in \left\{4t-2, 4t-1\right\}$.  
				
				{\bf Case 1.} 
				$ \big| N_{S(e)}\big(C(e)\big) \big|=4t-1$.
				
				By Lemma \ref{result 2} (\ref{result 3}), we know that every vertex of $S_{1} $ is contained in a $ 2t $-vertex-cut of $ G $, which contradicts Claim 1.
				
				{\bf Case 2.} 
				$ \big| N_{S(e)}\big(C(e)\big) \big|=4t-2$.
				
				Let $ W $ denote the set of vertices of $ N_{S(e)}\big(C(e)\big) $ contained in some $ 2t $-vertex-cuts of $ G $. By Lemma \ref{result 2} (\ref{result 11}), we have $ \lvert W\rvert\geq 2t-1 $. We claim that $ W\subseteq T $. Otherwise, since $ W\subseteq N_{G}(x)\cup N_{G}(y) $, we see that $ W\cap A\neq \emptyset $, which contradicts Claim 1.  Since $ T  $ is the minimum cut set of $ G $, then every vertex of $ W $ is adjacent to some vertex in $ G-T-A $. As $ W\subseteq N_{G}(x)\cup N_{G}(y)$ and $ G $ is claw-free, we see that $ N_{A}(W)\subseteq N_{G}(x)\cup N_{G}(y). $
				
				Suppose $ A-N_{G}[x]\cup N_{G}[y]\neq \emptyset $. As $ W\subseteq T\cap N_{S(e)}\big(C(e)\big) $ and $ \lvert W\rvert\geq 2t-1 $, we can obtain $ \lvert S_{1}\rvert \leq \big| N_{S(e)}\big(C(e)\big)\big|-\lvert W\rvert\leq 2t-1 $. Since $ N_{A}(W)\subseteq N_{G}(x)\cup N_{G}(y) $ and  $ \lvert W\rvert\geq 2t-1 $, we see that  $ S_{1}\cup (T-W) $ is a cut set of $ G $ of order at most $ 2t $, which contradicts Claim 1. Thus $ A-N_{G}[x]\cup N_{G}[y]=\emptyset $, i.e. $ A=\left\{x, y\right\}\cup S_{1} $. 
				
				Hence we have $ N_{D(e)}\left(S_{1}\right)\subseteq T $. Since $ \lvert S_{1}\rvert\geq 2 $, by Lemma \ref{result 2} (\ref{result 3}), we know that $ N_{D(e)}(S_{1})\neq \emptyset $.  Moreover, as $ \lvert W\rvert \geq 2t-1 $ and $ W\subseteq T $, we can obtain $\big|N_{D(e)}\left(S_{1}\right)\big|=1$ and $ \lvert W\rvert= 2t-1 .$  Suppose that there exists a vertex $ w_{i}\subseteq S_{1} $ not adjacent to $ D(e) $, by Lemma \ref{result 2} (\ref{result 3}), then every vertex of $ S_{1}-\left\{w_{i}\right\} $ is contained in a $ 2t $-vertex-cut of $ G $, which contradicts Claim 1. Thus every vertex of $ S_{1}$ has exactly one neighbor in $ D(e) $. Let $ N_{D(e)}\left(S_{1}\right)=\left\{t_{i} \right\}$ and $ C_{i} $ be the component containing $ t_{i} $ of $ G-S(e)-C(e) $. By Lemma \ref{result 2} (\ref{result 11}), we have $ \big|N_{S(e)}(C_{i})\big|\leq 2t+1 $. We claim that $ C_{i}=\left\{t_{i}\right\} $. Otherwise, as $ N_{D(e)}\left(S_{1}\right)=\left\{t_{i}\right\} $, it is easy to see that $ \left\{t_{i}\right\}\cup \big(N_{S(e)}(C_{i})-S_{1}\big) $ is a cut-set of $ G $. As $ \lvert S_{1}\rvert=\big|N_{S(e)}\big(C(e)\big)\big|-W= 2t-1 $, we can obtain $ \big|\left\{t_{i}\right\}\cup \big(N_{S(e)}(C_{i})-S_{1}\big)\big|\leq 3 $, which contradicts $ \kappa(G)\geq 2t\geq 4 $, a contradiction. So $ d_{G}(t_{i})\leq 2t+1 $, a contradiction. 
				
				Therefore $  \lceil 3t\rceil-1\leq k(e)\leq 4t-3 .$ 
			\end{proof}
		\end{adjustwidth}
		
		Assume $ e_{1}=uv\in E(A) $ satisfies $ \big|N_{G}(v)-N_{G}(u)\big|=\max\left\{\big|N_{G}(y)-N_{G}(x)\big|: xy\in E(A)\right\} .$  By Claim 2, we know that $ D(e_{1})\neq \emptyset $, $ C(e_{1})=\left\{u, v\right\} $ and $ \lceil 3t\rceil-1\leq k(e_{1})\leq 4t-3 $. Hence $ N_{S(e_{1})}\big(C(e_{1})\big)\subseteq N_{G}(u)\cup N_{G}(v) $. Let $ S_{2}=N_{S(e_{1})}\big(C(e_{1})\big)-T $. Clearly,  $ \emptyset\neq S_{2}\subseteq A $.
		
		{\bf Case 1.} 
		$ \big| N_{S(e_{1})}\big(C(e_{1})\big) \big|=\lceil 3t\rceil-1$.

		By the minimality of $ S(e_{1}) $, we know that $ S(e_{1})=N_{S(e_{1})}\big(C(e_{1})\big) .$ Since $ C(e)=\left\{u, v\right\} $ and $ \delta(G)\geq \lceil 3t\rceil $, we have $ S(e_{1})\subseteq N_{G}(u)\cap N_{G}(v).$ By the maximality of $ \big|N_{G}(v)-N_{G}(u)\big| $, we see that $ N_{G}(S_{2})\subseteq N_{G}[u] $, so $ S_{2}\subseteq N_{S(e_{1})}\big(C(e_{1})\big)-D(e_{1}) $. By Lemma \ref{result 2} (\ref{result 3}), we have $ \lvert S_{2}\rvert\leq \lceil t\rceil -1 $. And as $ \lvert S_{2}\rvert \geq k(e_{1})-\lvert T\rvert=\lceil t\rceil -1$, we can obtain $ \lvert S_{2}\rvert=\lceil t\rceil -1$. In addition, since $ \delta(G)\geq \lceil 3t\rceil $, we see that $ N_{G}(w_{i})=S(e_{1})\cup \left\{u, v\right\}-\left\{w_{i}\right\} $ for each $ w_{i}\in S_{2} .$ 
		
		Let $ w\in S(e_{1})\cap T $ and $ e_{2}=uw\in E(G) $. Since $ \left\{v\right\}\cup S_{2}\subseteq N_{G}(u)\cap N_{G}(w) $, we see that $ \left\{v\right\}\cup S_{2}\subseteq N_{S(e_{2})}\big(C(e_{2})\big) $. As $ \left\{v\right\}\cup S_{2}\subseteq N_{G}(u) $, we know that $ N_{D(e_{2})}\big(\left\{v\right\}\cup S_{2}\big)=\emptyset .$  By Lemma \ref{result 2} (\ref{result 3}), since $ \lvert \left\{v\right\}\cup S_{2}\rvert =\lceil t\rceil $, we have $ D(e_{2})=\emptyset .$  Thus we can obtain $ k(e_{2})\leq 2t-1 $ by Lemma 2.2.  We claim that $ C_{w}(e_{2})=\left\{w\right\} $. Suppose to the contrary that  $ C_{w}(e_{2})\neq\left\{w\right\} $. As $ \left\{v\right\}\cup S_{2}\subseteq N_{G}(u) $, we see that $ N_{S(e_{2})}\big(C(e_{2})-\left\{w\right\}\big)\cap \big(\left\{v\right\}\cup S_{2}\big)=\emptyset .$  As $  k(e_{2})\leq 2t-1 $ and $ \left|\left\{v\right\}\cup S_{2}\right|=\lceil t\rceil  $, we know that $ \big|N_{S(e_{2})}\big(C(e_{2})-\left\{w\right\}\big)\big|\leq \lceil t\rceil-1 $. So $ \left\{w\right\}\cup N_{S(e_{2})}\big(C(e_{2})-\left\{w\right\}\big) $ is a $ \lceil t\rceil  $-vertex-cut of $ G $, which contradicts $ \kappa(G)=2t $. Thus $ d_{G}(w)\leq 2t ,$ which contradicts our assumption. 
		
		{\bf Case 2.}
		$ \lceil  3t\rceil \leq \big| N_{S(e_{1})}\big(C(e_{1})\big) \big|\leq 4t-3$.
		
		Let $ x=\big|\left\{u\right\}\cup \big(N_{S(e_{1})}(v)-N_{S(e_{1})}(u)\big)\big|=\big|N_{G}(v)-N_{G}(u)\big| .$ As $ C(e)=\left\{u, v\right\} $, we have $ \delta(G)\leq d_{G}(u)\leq 4t-2-(x-1) $.
		
		Clearly, $ \lvert S_{2}\rvert \geq \lceil t\rceil .$ By Lemma \ref{result 2} (\ref{result 3}), there exists a vertex $ w_{1} $ in $ S_{2} $ adajcent to $ D(e_{1}) $.  Without loss of generality, we assume $ uw_{1}\in E(A) $ and $ e_{3}=w_{1}t_{1}\in E(G)$, where $ t_{1}\in D(e_{1}) .$ Let $ z_{1}=\big|N_{S(e_{1})}(w_{1})- N_{S(e_{1})}\big(C(e_{1})\big)\big|$.  As $$\Big|\left\{u\right\}\cup N_{D(e_{1})}(w_{1})\cup \Big(N_{S(e_{1})}(w_{1})-N_{S(e_{1})}\big(C(e_{1})\big)\Big)\Big|\leq \big|N_{G}(w_{1})-N_{G}(u)\big|\leq \big|N_{G}(v)-N_{G}(u)\big|=x ,$$ we see that $ \big|N_{D(e_{1})}\left(w_{1}\right)\big|\leq x-z_{1}-1 $. Moreover, by Lemma \ref{result 2} (\ref{result 6}), we know that $ \big|N_{S(e_{1})}(t_{1})\cap N_{S(e_{1})}\big(C(e_{1})\big)\big|\leq 2t-1 .$ It follows that 
		\begin{equation}
			\begin{aligned}
				\big|N_{G}(w_{1})\cap N_{G}(t_{1})\big|&\leq\big|N_{S(e_{1})}(t_{1})\cap N_{S(e_{1})}\big(C(e_{1})\big)-\left\{w_{1}\right\}\big|+ \big|N_{S(e_{1})}(w_{1})- N_{S(e_{1})}\big(C(e_{1})\big)\big|\\&+\big|N_{D(e_{1})}(w_{1})-\left\{t_{1}\right\}\big|\\&\leq 2t-2+z_{1}+x-z_{1}-2=2t+x-4, \nonumber		
			\end{aligned}
		\end{equation}

		{\bf Case 2.1.}
		$ t_{1}\in  A $.
		
		By Claim 1, we know that $ C(e_{3})=\left\{w_{1}, t_{1}\right\} $ and $ k(e_{3})\leq 4t-3 .$ Clearly, $ N_{G}(w_{1})\cap N_{G}(t_{1})\subseteq S(e_{3}) $. Thus  
		
		\begin{equation}
			\begin{aligned}
				\delta(G)&\leq \dfrac{k(e_{3})-\lvert N_{G}(w_{1})\cap N_{G}(t_{1}) \rvert}{2}+\lvert N_{G}(w_{1})\cap N_{G}(t_{1})\rvert +1\\&\leq  \dfrac{4t-3-(2t+x-4)}{2}+2t-3+x=3t+\dfrac{x-5}{2} .
			\end{aligned}
			\nonumber	
		\end{equation}
		Therefore we conclude $ \delta(G)\leq \min\left\{4t-1-x, 3t+\frac{x-5}{2}\right\}\leq \lceil \frac{10t}{3}\rceil-2$ .
		
		{\bf Case 2.2.}
		$ t_{1}\in  T $.
		
		Since $ A $ is an atom of $ G $, then $ t_{1} $ is adjacent to at least two vertices in $ A $. Moreover, since $ t_{1}  $ is adjacent some vertex of $ G-T-A $ and $ G $ is claw-free, we see that $ N_{A}(t_{1})\subseteq N_{G}(w_{1}) $. Let $ a\in N_{A}(w_{1})\cap N_{A}(t_{1}) $. Clearly, $ a\in S(e_{3}) $. Note that $ D(e_{3})\neq \emptyset $. Otherwise, by Lemma 2.3, since $ \delta(G)\geq \lceil 3t\rceil $, we know that $ w_{1} $ is contained in a $ 2t $-vertex-cut of $ G $, which contradicts Claim 1. And  $ C(e_{3})=\left\{w_{1}, t_{1} \right\} $.
		Otherwise, by Lemma \ref{result 2} (\ref{result 4}) and (\ref{result 5}), since $ \delta(G)\geq \lceil 3t\rceil $, we know that $ w_{1} $ or $ a $ is contained in a $ 2t $-vertex-cut of $ G $, which contradicts Claim 1.
		By Lemma \ref{result 2} (\ref{result 3}),  we know that $ k(e_{3})\leq 4t-1 $. If $ k(e_{3})= 4t-1 ,$ by Lemma \ref{result 2} (3), then $ a $ is contained in a $ 2t $-vertex-cut of $ G $, which contradicts Claim 1. If $ k(e_{3})\leq 4t-2 $, as $ C(e_{3})=\left\{w_{1}, t_{1} \right\} $, then $$ \delta(G)\leq \dfrac{4t-2-(2t+x-4)}{2}+2t-3+x=3t+\dfrac{x-4}{2} .$$ Thus $ \delta(G)\leq \min\left\{4t-1-x, 3t+\frac{x-4}{2}\right\}\leq \big \lceil \frac{10t-5}{3}\big \rceil .$
		
		Above all, we conclude $ \delta(G)\leq \max \left\{ \big \lceil \frac{10t}{3}\big \rceil -2 , \big\lceil \frac{10t-5}{3}\big \rceil \right\}\leq\big \lceil \frac{10t-5}{3}\big \rceil $.

	\end{document}